\documentclass[a4paper,leqno]{amsart}

\usepackage{amsmath,amssymb,amsfonts,amsthm,mathrsfs}

\usepackage{hyperref}
\usepackage{pdfsync}



\newcommand{\C}{\mathbb C}

\newcommand{\R}{\mathbb R}
\newcommand{\N}{\mathbb N}

\def\({\left(}
\def\){\right)}
\def\<{\left\langle}
\def\>{\right\rangle}


\def\d{{\partial}}
\def\eps{\varepsilon}
\def\si{\sigma}
\def\le{\leqslant}
\def\ge{\geqslant}

\DeclareMathOperator{\IM}{Im}

\def\Eq#1#2{\mathop{\sim}\limits_{#1\rightarrow#2}}
\def\Tend#1#2{\mathop{\longrightarrow}\limits_{#1\rightarrow#2}}

\theoremstyle{plain}
\newtheorem{theorem}{Theorem} [section]
\newtheorem{lemma}[theorem]{Lemma}
\newtheorem{corollary}[theorem]{Corollary}
\newtheorem{proposition}[theorem]{Proposition}

\theoremstyle{remark}
\newtheorem{remark}[theorem]{Remark}

\theoremstyle{definition}
\newtheorem{definition}[theorem]{Definition}

\def\Eq#1#2{\mathop{\sim}\limits_{#1\rightarrow#2}}
\def\Tend#1#2{\mathop{\longrightarrow}\limits_{#1\rightarrow#2}}

\numberwithin{equation}{section}


\begin{document}

\title[Intercritical log-modified Schr\"odinger equation]{On an intercritical log-modified nonlinear Schr\"odinger equation in two spatial dimensions}  

\author[R. Carles]{R\'emi Carles}
\address{Univ Rennes, CNRS\\ IRMAR - UMR 6625\\ F-35000
  Rennes, France}
\email{Remi.Carles@math.cnrs.fr}

\author[C. Sparber]{Christof Sparber}
\address{Department of Mathematics, Statistics, and
  Computer Science\\
  M/C 249\\
University of Illinois at Chicago\\
851 S. Morgan Street
Chicago\\ IL 60607, USA} 
\email{sparber@uic.edu}

\begin{abstract}
We consider a dispersive equation of Schr\"odinger type with a nonlinearity slightly larger than cubic by a logarithmic factor. This equation is 
supposed to be an effective model for stable two dimensional quantum droplets with LHY correction. 
Mathematically, it is seen to be mass supercritical and energy subcritical with a sign-indefinite nonlinearity. For the corresponding initial value problem, we prove global in-time
existence of strong solutions in the energy space. Furthermore, we prove the existence and uniqueness (up to symmetries) of nonlinear ground states 
and the orbital stability of the set of energy minimizers. We also show that for the corresponding model in 1D a stronger stability result is available.
\end{abstract}

\date{\today}

\subjclass[2010]{35Q55, 35A01}
\keywords{Nonlinear Schr\"odinger equation, solitary waves, orbital stability}

\thanks{RC is supported by Rennes M\'etropole through its AIS program. CS acknowledges support by the NSF through grant no. DMS-1348092}
\maketitle


\section{Introduction}\label{sec:intro}

In this paper we consider the Cauchy problem for the following log-modified nonlinear Schr\"odinger equation (NLS) on $\R^2$:
\begin{equation}\label{eq:nls}
\left\{
\begin{aligned}
 & i\d_t u +\frac{1}{2}\Delta u =\lambda u|u|^2\ln |u|^2,\quad x\in
 \R^2,\ \lambda>0,\\
  & u(0,x) = u_0\in H^1(\R^2).
\end{aligned}  
\right.
\end{equation}
This model is discussed in the physics literature (cf. \cite{MALOMED19, PhysRevLett117, PhysRevLett.123}) as an effective mean-field description of ultra-dilute quantum fluids in two spatial dimensions. 
The logarithmic factor thereby stems from the LHY-correction (after Lee-Huang-Yang), a series expansion in the mean particle density of 
Bose-Einstein condensates with origins in the work of Bogolubov (see, e.g., \cite{LHY, PhysRevLett.115} for more details). 
It is argued that the LHY correction should have a stabilizing effect on an otherwise collapsing condensate, allowing for stable soliton-like modes, 
which are often called {\it quantum droplets}. 
Unfortunately, there are only a few results available to date concerning the rigorous mathematical derivation of the LHY correction, the most recent being \cite{Brietzke2020} 
concerning second order corrections to the (mean-field) bosonic ground state energy in three spatial dimensions. 
The corresponding problem in 2D, however, still remains open.

Nevertheless, the NLS \eqref{eq:nls} has several mathematical
properties which make it an intriguing model to study: It can be seen
as the Hamiltonian evolution equation associated to the  
following {\it energy functional}
\begin{equation}\label{eq:energy}
  E(u) := \frac{1}{2}\|\nabla u\|_{L^2(\R^2)}^2 +\frac{\lambda}{2}
  \int_{\R^2}|u|^4\ln\left(\frac{|u|^2}{\sqrt e}\right)\, dx.
\end{equation}
The latter is thus (at least formally) conserved by solutions to \eqref{eq:nls}, as are the total {\it mass} and {\it momentum}, i.e.,
\begin{equation}\label{eq:mass}
  M(u):=\int_{\R^2}|u|^2\, dx,\quad P(u) := \int_{\R^2}\IM \overline u \nabla u\, dx.
\end{equation}
In view of \eqref{eq:energy}, one sees that the second term in the energy, i.e., the one stemming from the nonlinearity, has no definite sign. Indeed, in terms of the usual classification of NLS (see, e.g. \cite{CazCourant}), the 
nonlinearity in \eqref{eq:nls} is seen to be {\it defocusing} (or
repulsive) whenever the density $|u|^2>\sqrt e$ and {\it focusing} (or
attractive) whenever $|u|^2<\sqrt e$. Furthermore, it is well known that  
in the case of pure power-law nonlinearities such as $\lambda |u|^{p-1}u$, solutions $u$ to NLS obey the additional scaling symmetry
$$
u(t,x)\mapsto u_\mu (t,x)=\mu^{{2}/{(p-1)}} u(\mu^2 t, \mu x),\ \mu >0.
$$
In two spatial dimensions, this implies that the {\it cubic case} $p=3$ is {\it mass-critical}, since in this case the transformation also 
preserves the $L^2(\R^2)$-norm of $u$. It has been proved,
that the corresponding Cauchy problem is globally well-posed in
$L^2(\R^2)$ in the defocusing case, and also in the focusing case for masses below the one of the
ground state (cf. \cite{Dodson15,Dodson16} for more details). 
Furthermore the Cauchy problem becomes ill-posed if
one tries to study it in spaces which are less regular than $L^2$ (\cite{KPV01}).

Coming back to our model, we first note that due to the appearance of the logarithmic factor, \eqref{eq:nls} does not obey any scaling symmetry. However, since 
for all $\eps>0$, we have
\begin{equation*}
  \left\lvert u|u|^2\ln |u|^2\right\rvert\lesssim |u|^{3-\eps} + |u|^{3+\eps} ,
\end{equation*}
the log-modified NLS can formally be seen to be {\it inter-critical}, in two different ways: First, its nonlinearity is slightly larger than cubic, and thus mass supercritical, 
but still remains energy subcritical. Second, it can be understood as the sum of a slightly $L^2$-subcritical (focusing) nonlinearity and a slightly $L^2$-supercritical (defocusing) nonlinearity. It is therefore similar to the case of NLS with competing cubic-quintic power law nonlinearities, i.e. 
\begin{equation}\label{eq:cubicqutinic}
 i\d_t u +\frac{1}{2}\Delta u =- |u|^2u + |u|^4u,\\
 \end{equation}
which has been studied in \cite{KOPV17} in 3D, and, more recently, in \cite{CaSp20, LewinRotaNodari-p} in various space dimensions.
\smallbreak

Our first main result of this work is as follows:

\begin{theorem}[Global well-posedness]\label{thm:gwp}
For any $u_0\in H^1(\R^2)$, there exists a unique global in-time solution 
$u\in C(\R; H^1(\R^2))\cap C^1(\R;H^{-1}(\R^2))$
to \eqref{eq:nls}, depending continuously on the initial data $u_0$. 
Furthermore the solution $u$ obeys the conservation of mass, energy and momentum.
\end{theorem}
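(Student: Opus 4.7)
My plan is the standard three-step procedure: local well-posedness, a priori $H^1$ bound from the conservation laws, and then iteration.

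For local well-posedness in $H^1(\R^2)$, I would apply a Kato / Cazenave fixed-point argument on the Duhamel formulation based on Strichartz estimates. The nonlinearity $f(z):=\lambda z|z|^2\ln|z|^2$ (with $f(0)=0$) is not globally Lipschitz, so I would split it as $f=f_1+f_2$ where $f_1(z)=f(z)\mathbf{1}_{|z|\le1}$ and $f_2(z)=f(z)\mathbf{1}_{|z|>1}$. The bounds
\begin{equation*}
 |f_1(z)|+|f_1'(z)|\lesssim_\eps |z|^{3-\eps},\qquad |f_2(z)|+|f_2'(z)|\lesssim_\eps |z|^{3+\eps},
\end{equation*}
valid for any $\eps>0$, show that each piece is $H^1$-subcritical in two dimensions (choosing $\eps$ small). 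Thus the standard contraction argument in a space of the form $C([0,T];H^1)\cap L^q([0,T];W^{1,r})$ for suitable admissible pairs yields a unique maximal solution $u\in C((-T_{\min},T_{\max});H^1(\R^2))\cap C^1((-T_{\min},T_{\max});H^{-1}(\R^2))$, together with the standard blow-up alternative and continuous dependence on the initial data.

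The crux is the a priori $H^1$ bound. Mass is trivially conserved, and formal conservation of $E$ is made rigorous by a regularization of the initial data (or of the nonlinearity, e.g.\ $f_\delta(z)=\lambda z|z|^2\ln(|z|^2+\delta)$), arguing along the lines of Cazenave's book. Granting this, I need to control $\|\nabla u\|_{L^2}^2$ from $E(u)$ and $M(u)$. The nonlinear energy density $|u|^4\ln(|u|^2/\sqrt e)$ is nonnegative on $\{|u|^2\ge\sqrt e\}$, so only the set $\{|u|^2<\sqrt e\}$ is problematic. On this set, for any small $\eta>0$ the elementary bound $s^\eta|\ln s|\le C_\eta$ uniformly for $s\in(0,\sqrt e\,]$ gives
\begin{equation*}
 \left||u|^4\ln(|u|^2/\sqrt e)\right|\mathbf{1}_{|u|^2<\sqrt e}\lesssim_\eta |u|^{4-2\eta}.
\end{equation*}
Integrating and applying the two-dimensional Gagliardo--Nirenberg inequality $\|u\|_{L^{4-2\eta}}^{4-2\eta}\lesssim\|u\|_{L^2}^{2}\|\nabla u\|_{L^2}^{2-2\eta}$, I obtain
\begin{equation*}
 E(u)\ge \tfrac12\|\nabla u\|_{L^2}^2-C_\eta\lambda\,M(u)\,\|\nabla u\|_{L^2}^{2-2\eta}.
\end{equation*}
Since $2-2\eta<2$, Young's inequality absorbs the last term into $\tfrac12\|\nabla u\|_{L^2}^2$ and leaves a constant depending only on $M(u_0)$ and $E(u_0)$. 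This is the main obstacle: one has to handle the sign-indefinite, nonhomogeneous logarithmic nonlinearity without the help of a scaling law, and this Gagliardo--Nirenberg argument is exactly what plays the role of an energy-subcritical coercivity estimate here.

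Combining conservation of mass and energy with the bound just obtained yields a uniform $H^1$ estimate on the entire maximal interval of existence; the blow-up alternative then forces $T_{\max}=T_{\min}=+\infty$, so the solution is global. Finally, conservation of momentum and the continuity properties follow from the same regularization scheme used to justify energy conservation, completing the proof.
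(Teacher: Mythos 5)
Your proposal is correct and follows the same overall skeleton as the paper: local well-posedness via a Kato-type fixed point for the energy-subcritical nonlinearity, conservation of mass and energy, an a priori $H^1$ bound, and the blow-up alternative. The one place where you genuinely diverge is the coercivity estimate. The paper handles the sign-indefinite part of the energy with the elementary pointwise inequality $\ln x\le x$ applied to $x=\sqrt e/|u|^2$ on the set $\{|u|^2<\sqrt e\}$, which gives $|u|^4\ln\bigl(\sqrt e/|u|^2\bigr)\le \sqrt e\,|u|^2$ and hence bounds the negative contribution directly by $\tfrac{\lambda}{2}\sqrt e\,M(u_0)$ --- no interpolation inequality is needed. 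You instead use $s^{\eta}|\ln s|\lesssim_\eta 1$ to reduce to an $L^{4-2\eta}$ norm, then Gagliardo--Nirenberg and Young's inequality to absorb the resulting $\|\nabla u\|_{L^2}^{2-2\eta}$ term. Both arguments are valid; the paper's is shorter and exploits that in any dimension the defect can be dominated by the mass, while yours is the more standard ``subcritical power'' route and would survive if the focusing part behaved like a genuine power $|u|^{4-2\eta}$ rather than being dominated by $|u|^2$. One small slip worth fixing: your displayed bound $|f_1(z)|+|f_1'(z)|\lesssim_\eps|z|^{3-\eps}$ is false for the derivative, since $f'(z)\sim|z|^2\ln|z|^2$, so the correct statement is $|f_1'(z)|\lesssim_\eps|z|^{2-\eps}$ (and $|f_2'(z)|\lesssim_\eps|z|^{2+\eps}$), which is exactly the form of the gradient bound the paper verifies and is what the Kato/Cazenave contraction actually requires; the argument goes through unchanged with the corrected exponent.
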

 
This result can be interpreted as a rigorous expression of the stabilizing effect of the LHY correction in two spatial dimensions. Recall that the focusing, cubic NLS in 
two spatial dimensions, in general, exhibits finite-time blow-up of solutions. The introduction of the logarithmic factor prevents any such blow-up from happening. 
\begin{remark}
  Since \eqref{eq:nls} is a logarithmic perturbation of the
  $L^2$-critical case, local and global well-posedness might even hold in $L^2(\R^2)$, 
  in view of the similar case of a (smooth) logarithmic perturbation of
  an energy-critical wave-equation considered in \cite{Tao07}. 
\end{remark}
Our second main result concerns the properties of {\it solitary waves}, i.e., solutions of the form 
\[
u(t,x) = e^{i\omega t} \phi(x), \quad \omega \in \R, 
\]
where $\phi$ solves
\begin{equation}
 \label{eq:ground}
 -\frac{1}{2}\Delta \phi +\lambda \phi|\phi|^2\ln |\phi|^2 +\omega
 \phi=0,\quad x\in \R^2.
\end{equation}
Clearly, solutions to this equation can only be unique up to translations and phase conjugation, 
a fact that, together with the Galilei invariance of \eqref{eq:nls}, allows one to subsequently construct 
more general solitary waves, moving with non-zero speed. 

In the following we shall denote the 
{\it action} associated to \eqref{eq:ground} by 
\[
S(\phi) = E(\phi) + \omega M(\phi).
\]
A solution $\phi$ is called a nonlinear \emph{ground state} if it minimizes the action $S(\phi)$ among all possible solutions
 $\phi$ of \eqref{eq:ground}. It follows from \cite[Lemma~2.3]{CiJeSe09} and \cite[Proposition~4]{ByJeMa09} 
 that every minimizer $\varphi$ of the action $S(\phi)$ is of the form
\begin{equation*}
  \varphi(x) = e^{i\theta} \phi_\omega(x-x_0),
\end{equation*}
for some constants $\theta\in \R$, $x_0\in \R^2$, and where $\phi_\omega$ is a {\it positive} least
action solution to \eqref{eq:ground}. The existence and uniqueness of such positive ground states is the content of our second main result.

\begin{theorem}[Existence and uniqueness of positive ground states]\label{thm:ground}
Suppose that the frequency $\omega \in \R$ satisfies
\begin{equation*}
  0<\omega<\frac{\lambda}{2\sqrt e}.
\end{equation*}
Then \eqref{eq:ground} admits a unique solution $\phi_\omega \in C^2(\R^2)$ which is radially symmetric and 
exponentially decaying as $|x|\to \infty$. Moreover, for all $x\in \R^2$, it holds
\[
0<\phi_\omega(x)<\sqrt{z_\omega},   
\]
for some uniquely defined parameter $z_\omega\in (\tfrac{1}{e},1)$, which satisfies $z_\omega \to 1$ as $\omega \to 0_+$.
\end{theorem}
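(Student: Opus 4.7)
The argument naturally splits into existence, a pointwise bound yielding the parameter $z_\omega$, and uniqueness.

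\emph{Existence.} I would rewrite \eqref{eq:ground} as $-\Delta\phi=f(\phi)$ with $f(u):=-2\omega u-2\lambda u^3\ln u^2$. A direct integration gives
\[
F(u):=\int_0^u f(s)\,ds=-\omega u^2-\frac{\lambda u^4}{2}\ln\!\left(\frac{u^2}{\sqrt e}\right),
\]
and the map $z\mapsto -\frac{\lambda z}{2}\ln(z/\sqrt e)$ attains its maximum $\lambda/(2\sqrt e)$ at $z=1/\sqrt e$. The standing assumption $\omega<\lambda/(2\sqrt e)$ is therefore exactly what is needed to ensure $F(u_0)>0$ for some $u_0>0$. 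Combined with the mass condition $f(u)/u\to-2\omega<0$ as $u\to 0^+$ and the subexponential growth of $f$ at infinity, this supplies the hypotheses of the two-dimensional Berestycki--Lions theorem (due to Berestycki--Gallouet--Kavian), producing a positive radial solution $\phi_\omega\in H^1(\R^2)$. Elliptic regularity then gives $\phi_\omega\in C^2(\R^2)$, the moving-plane method yields radial monotonicity, and the standard ODE asymptotic analysis (the equation linearises for large $r$ to the modified Bessel equation $\phi''+r^{-1}\phi'=2\omega\phi$) yields the claimed exponential decay.

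\emph{Pointwise bound.} At $r=0$ one has $\phi'_\omega(0)=0$ and, since the maximum is attained there, $\Delta\phi_\omega(0)\le 0$; evaluating \eqref{eq:ground} this rewrites as $H(\phi_\omega(0)^2)\ge\omega$ with $H(z):=-\lambda z\ln z$. Now $H$ peaks at $z=1/e$ with value $\lambda/e>\omega$ and vanishes at $z=1$, so the superlevel set $\{H\ge\omega\}$ is a compact interval $[z_1,z_\omega]\subset(0,1)$ with $z_1<1/e<z_\omega$. Declaring $z_\omega$ to be the larger root of $H(z)=\omega$, we obtain $z_\omega\in(1/e,1)$ and $z_\omega\to 1$ as $\omega\to 0^+$. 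The bound $\phi_\omega(0)^2\le z_\omega$ is immediate; equality would force $\phi_\omega''(0)=0$, whence ODE uniqueness for the radial equation (locally Lipschitz away from $u=0$) would give $\phi_\omega$ constant, contradicting decay. Radial monotonicity then propagates the strict inequality to all $x\in\R^2$.

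\emph{Uniqueness.} This is the main difficulty. Any positive radial solution solves the radial ODE Cauchy problem with data $(\phi(0),\phi'(0))=(a,0)$, and by the previous step $a\in(\sqrt{z_1},\sqrt{z_\omega})$. It suffices to show that at most one such $a$ produces a solution decaying at infinity. The plan is to apply a shooting argument of Coffman--Kwong--McLeod type in the form of Serrin--Tang (or as adapted in \cite{LewinRotaNodari-p}, which addresses the analogous two-positive-zero structure coming from the competing cubic--quintic nonlinearity). The decisive structural input is the single sign change of
\[
\frac{d}{du}\!\left(\frac{f(u)}{u}\right)=-4\lambda u\ln(eu^2),
\]
positive for $u<1/\sqrt e$ and negative for $u>1/\sqrt e$, which provides the comparison between two putative solutions across the relevant range. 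The expected obstacle is verifying that the classical uniqueness machinery survives the logarithmic factor; this should require only minor modifications, since $f$ is smooth on $(0,\infty)$ and its near-zero behaviour is governed by the linear term $-2\omega u$, providing the exponential decay on which Coffman's asymptotic comparison relies.
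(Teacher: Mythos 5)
Your existence argument and your pointwise bound follow the paper's route essentially verbatim: existence is obtained from the two-dimensional Berestycki--Lions theory of \cite{BGK83} by checking conditions $(g.0)$--$(g.3)$, the decisive one being that $G$ takes a positive value precisely when $\omega<\lambda/(2\sqrt e)$ (the maximizer is $u=e^{-1/4}$, i.e.\ $z=u^2=e^{-1/2}$ in your parametrization, so your computation agrees with the paper's); and the bound $\phi_\omega<\sqrt{z_\omega}$ comes from evaluating the equation at the maximum point and locating the superlevel set of $z\mapsto -\lambda z\ln z$, exactly as you do. Your discussion of the two roots $z_1<1/e<z_\omega$ and of strictness is, if anything, slightly more explicit than the paper's.

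The gap is in the uniqueness step. You correctly identify that the nonlinearity has the two-positive-zero structure of the cubic--quintic problem, so the classical Serrin--Tang theorem does not apply off the shelf, and you point to the adapted results; the paper uses \cite[Theorem~1.1]{JANG2010}. But the hypothesis that actually has to be verified there is the monotonicity of $s(z)=zg'(z)/g(z)$ on $(u_1,\sqrt{z_\omega})$, where $g$ is your $f$ and $u_1$ is the first positive zero of the primitive $G$ --- not the single sign change of $\frac{d}{du}\bigl(f(u)/u\bigr)$ that you compute. These are different quantities, and the former is where the real work lies: the paper's proof rests on the identity
\[
g(z)^2\,s'(z)=4\lambda z^3\bigl(2\omega(1+\ln z)-\lambda z^2\bigr),
\]
together with the check that the right-hand side is negative on $(u_1,\sqrt{z_\omega})$, and on verifying that one may take $\bar u=\sqrt{z_\omega}$ with $G>0$ on $(u_1,\sqrt{z_\omega})$ (which uses the sign pattern of $g$ via the auxiliary zero $\alpha<u_1$). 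Your proposal defers all of this to ``minor modifications,'' so as written uniqueness is asserted rather than proved. To close the gap you would need to carry out the verification of conditions $(f1)$--$(f3)$ of \cite{JANG2010} for this specific logarithmic nonlinearity (as the paper does), or the corresponding hypotheses of the result in \cite{LewinRotaNodari-p}; neither follows automatically from the sign change of $(f(u)/u)'$.
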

These ground states can be physically interpreted as quantum droplets with zero vorticity. In numerical simulations, 
they are found to have a rather flat top with nearly constant value of the density in its interior, see \cite{MALOMED19}. 

As a final result we shall turn to the question of {\it orbital stability} of solitary waves. To this end we first recall the following notions for constrained energy minimizers.
\begin{definition}\label{def:set-stability}
  For  $\rho>0$, denote
  \begin{equation*}
    \Gamma(\rho) = \left\{ u\in H^1(\R^2),\  M(u)=\rho\right\}.
  \end{equation*}
Assuming that the minimization problem
  \begin{equation}\label{eq:8.3.5}
u\in \Gamma(\rho),\quad E(u)=    \inf \{ E(v)\ ;\  v\in \Gamma(\rho)\} 
  \end{equation}
 has a solution, we shall denote by $\mathcal E(\rho)$ the set of all possible
(constraint) energy minimizers. We call this set {\it orbitally stable}, if 
 for all $\eps>0$, there exists $\delta>0$ such that if
 $u_0\in H^1(\R^2)$ satisfies
 \[\inf_{\phi\in \mathcal E(\rho)}\|u_0-\phi\|_{H^1}\le \delta,\]
  then the
  solution to \eqref{eq:nls} with $u_{\mid t=0}=u_0$ satisfies
  \begin{equation*}
    \sup_{t\in \R}\inf_{\phi\in \mathcal E(\rho)}\left\|u(t,
      \cdot)-\phi\right\|_{H^1}\le \eps. 
   \end{equation*}
\end{definition}

\begin{theorem}[Orbital stability of energy minimizers]\label{thm:stab}
Given any $\rho>0$, the set $\mathcal E(\rho)$
       is non-empty and orbitally stable.
\end{theorem}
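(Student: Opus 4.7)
The plan is to apply the classical Cazenave--Lions variational approach, which reduces orbital stability to the relative compactness in $H^1(\R^2)$, modulo translations, of every minimizing sequence for
\[
I(\rho) := \inf_{u\in\Gamma(\rho)} E(u).
\]
Given such compactness, orbital stability follows by the standard contradiction argument: any counterexample would provide initial data $u_0^{(n)}$ approaching $\mathcal{E}(\rho)$ in $H^1$ and times $t_n$ with $u^{(n)}(t_n)$ bounded away from $\mathcal{E}(\rho)$, yet by conservation of mass and energy (Theorem~\ref{thm:gwp}) the sequence $(u^{(n)}(t_n))$ is itself minimizing for $I(\rho)$, contradicting the compactness.

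The first analytical step is to establish $-\infty < I(\rho) < 0$ together with the $H^1$-boundedness of minimizing sequences. Using $|t^2\ln t|\lesssim_\eps t^{2-\eps}+t^{2+\eps}$ at $t=|u|^2$, splitting the nonlinear integrand according to whether $|u|^2\gtrless\sqrt e$ (the defocusing region contributes a non-negative term, while the focusing region is controlled by $\|u\|_{L^{4-\eps}}^{4-\eps}$ plus a contribution bounded by the mass), and applying the 2D Gagliardo--Nirenberg inequality $\|u\|_{L^{4-\eps}}^{4-\eps}\lesssim\|\nabla u\|_{L^2}^{2-\eps}\|u\|_{L^2}^2$ with Young's inequality, one obtains
\[
E(u) \ge \tfrac14\|\nabla u\|_{L^2}^2 - C_\rho\quad\text{on }\Gamma(\rho),
\]
yielding the lower bound and coercivity. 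The strict negativity $I(\rho)<0$ comes from the $L^2$-preserving dilation $u_\mu(x):=\mu u(\mu x)$, for which a direct computation gives
\[
E(u_\mu) = \mu^2 E(u) + \lambda\mu^2(\ln\mu)\|u\|_{L^4}^4,
\]
which is negative for any nontrivial $u \in H^1$ and $\mu>0$ small enough.

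With these preliminaries in hand, I would apply Lions' concentration--compactness lemma to $\rho^{-1}|u_n|^2\in L^1(\R^2)$. Vanishing would force $u_n\to 0$ in every $L^p$ with $p\in(2,\infty)$, and the elementary log-bound then makes the nonlinear part of $E(u_n)$ tend to zero, yielding $\liminf E(u_n)\ge 0 > I(\rho)$, a contradiction. Ruling out dichotomy requires the strict subadditivity
\[
I(\rho_1+\rho_2) < I(\rho_1)+I(\rho_2),\quad \rho_1,\rho_2>0,
\]
which I expect to be the main technical obstacle, since \eqref{eq:nls} lacks the scaling symmetry of a pure power nonlinearity. I would establish it by proving that $\rho\mapsto I(\rho)/\rho$ is strictly decreasing, which implies strict subadditivity by elementary algebraic manipulation. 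To this end, the dilation $v:=u(\cdot/\alpha)$, $\alpha>1$, leaves $\|\nabla u\|_{L^2}$ invariant and multiplies both $M(u)$ and $N(u):=\int|u|^4\ln(|u|^2/\sqrt e)\,dx$ by $\alpha^2$, giving $v\in\Gamma(\alpha^2\rho_1)$ with energy
\[
E(v) = \alpha^2 E(u) - \tfrac{\alpha^2-1}{2}\|\nabla u\|_{L^2}^2 < \alpha^2 E(u).
\]
Since $\|\nabla u\|_{L^2}$ is bounded away from zero along any minimizing sequence for $I(\rho_1)$ (otherwise Gagliardo--Nirenberg would force all $L^p$-norms with $p>2$, hence also $N(u)$, to vanish, giving $E(u)\to 0 > I(\rho_1)$), choosing $u$ a near-minimizer yields $I(\alpha^2\rho_1)<\alpha^2 I(\rho_1)$, i.e.\ the strict monotonicity of $I(\rho)/\rho$.

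With vanishing and dichotomy excluded, the compactness case of the concentration--compactness lemma provides translations $y_n\in\R^2$ such that $u_n(\cdot-y_n)\to\phi$ strongly in $L^p(\R^2)$ for every $p\in[2,\infty)$ and weakly in $H^1$. The elementary log-bound makes the nonlinear functional continuous under this convergence, and the weak lower semi-continuity of $\|\nabla\,\cdot\,\|_{L^2}^2$ forces $E(\phi)=I(\rho)$ and $\|\nabla u_n(\cdot-y_n)\|_{L^2}\to\|\nabla\phi\|_{L^2}$, upgrading the convergence to strong $H^1$. Hence $\mathcal{E}(\rho)\ne\emptyset$ and every minimizing sequence is relatively compact in $H^1$ modulo translations, completing the proof via the contradiction argument sketched at the outset.
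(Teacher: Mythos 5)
Your proof is correct and sits inside the same Cazenave--Lions framework as the paper (coercivity and strict negativity of $I(\rho)$ via the mass-preserving dilation $u_\mu=\mu u(\mu\cdot)$, concentration--compactness, then the standard contradiction argument from conservation of mass and energy), but it diverges at the key step of excluding dichotomy. You establish strict subadditivity of $I$ by showing $\rho\mapsto I(\rho)/\rho$ is strictly decreasing, using the dilation $u\mapsto u(\cdot/\alpha)$, which in 2D preserves $\|\nabla u\|_{L^2}$ and scales both the mass and the nonlinear functional by $\alpha^2$; the strictness comes from your (correct) observation that $\liminf\|\nabla u_n\|_{L^2}>0$ along any minimizing sequence. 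The paper applies the very same dilation identity, but directly to the dichotomy pieces $v_k,w_k$ (rescaling each to mass $\rho$, following the idea of Colin--Jeanjean--Squassina), and contradicts $\limsup\,(E(v_k)+E(w_k))\le-\nu$ using $\liminf\|u_{n_k}\|_{L^4}^4>0$; this bypasses an explicit subadditivity statement and the continuity of $I$ in $\rho$, which your route implicitly needs since the masses of the dichotomy pieces only converge to $\theta\rho$ and $(1-\theta)\rho$. The one substantive step you leave implicit is the almost-additivity of the nonlinear energy under the dichotomy decomposition, i.e.\ that $\int h(|u_{n_k}|)-\int h(|v_k|)-\int h(|w_k|)\to 0$ for $h(y)=y^4\ln y$; this is not automatic for a logarithmic nonlinearity, and the paper devotes a Taylor-expansion argument based on $|h'(y)|\lesssim y^{3-\eps}+y^{3+\eps}$ to it. Your remaining deviations --- Gagliardo--Nirenberg plus Young for the lower energy bound instead of the pointwise inequality $\ln t\le t$, and ruling out vanishing through the smallness of the nonlinear energy rather than a lower bound on $\|u_n\|_{L^3}$ --- are cosmetic, and both approaches deliver the same compactness and hence the same stability conclusion.
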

The fact that energy minimizers are orbitally stable is in sharp
contrast to the case of the usual focusing cubic NLS in two spatial dimensions, for which
all solitary waves are known to be {\it strongly unstable} due to the possibility of blow-up, see \cite{CazCourant}. 
(In the defocusing case, there is no solitary wave and all solutions scatter.)

Using re-arrangement inequalities, cf. \cite{LiLo}, it is possible to infer that every energy minimizer is radially decreasing and solves \eqref{eq:ground} 
for some Lagrange multiplier $\omega>0$. Hence, the energy minimizer equals a nonlinear ground state $\phi_\omega$, 
possibly after an appropriate space translation (for a given mass and
for a certain fixed $\omega$, minimizing the action or the energy is equivalent). The difficulty, however, is that several $\omega$'s could, at least in principle, yield the same mass $\rho$. 
Thus, uniqueness of solutions to \eqref{eq:ground} at fixed $\omega$ 
does not imply the uniqueness of energy minimizers. The only cases for which this uniqueness is known to be true
seem to be the 
one of a single pure power law nonlinearity $|u|^{p-1}u$, see \cite{CazCourant},
and the one of a purely logarithmic nonlinearity $u\ln|u|^2$, cf. \cite{Ar16}. It is, nevertheless conjectured that uniqueness holds true 
for more general nonlinearities, see e.g. \cite{CaSp20, HajStu, LewinRotaNodari-p} for a more detailed discussion on this. 

The fact that there exists energy minimizer with arbitrarily small mass $\rho>0$ (among the set of ground states), also implies 
that there is no positive lower bound on the mass of ground states. This is in contrast to the case of the 
cubic-quintic NLS \eqref{eq:cubicqutinic} in 2D. 
For the latter it is known that all ground states have mass {\it strictly bigger} than the one of the {\it cubic} nonlinear ground state $Q$, see \cite{CaSp20}.
In Section \ref{sec:prop}, we shall present arguments  showing that
\[
M(\phi_\omega)\equiv\| \phi_\omega\|^2_{L^2}\to 0, \quad \text{as $\omega \to 0_+$.}
\]

The rest of this paper is devoted to the proof of these theorems, which will be done via a series of technical results given in 
Sections \ref{sec:Cauchy}--\ref{sec:stab} below. In there, we will 
also add further remarks and results on topics such as scattering and
the asymptotic behavior of $\phi_\omega$. Finally, in an appendix, we
address the analogue of \eqref{eq:nls} in 1D: Our Theorem~\ref{theo:stab1D} suggests that
ground states for \eqref{eq:nls} are indeed orbitally stable in the sense of,
e.g., \cite{BGR15}.


\section{Cauchy problem}\label{sec:Cauchy}

\subsection{Global well-posedness} The aim of this subsection is to prove Theorem \ref{thm:gwp}. 
To this end, we start by first proving local well-posedness of \eqref{eq:nls}, when rewritten through Duhamel's formula, i.e.
\begin{equation}\label{eq:duhamel}
u(t) = e^{i\frac{t}{2} \Delta } u_0 - i \lambda \int_0^t e^{i\frac{t-s}{2} \Delta} f(u)(s) \, ds,
\end{equation}
where here and in the following, we denote
\[
f(z)= z|z|^2\ln |z|^2, \quad z\in \C.
\]
A classical fixed point argument, based on the use of Strichartz estimates, then yields the following result.
\begin{proposition}[Local well-posedness]
For any $u_0\in H^1(\R^2)$ and any $\lambda\in \R$, there exist times $T>0$ and a unique solution 
\[
u\in C([0, T];H^1(\R^2))\cap C^1((0, T); H^{-1}(\R^2)), 
\]
to \eqref{eq:duhamel}, depending continuously on $u_0$. Moreover $u$ conserves its mass, energy, and momentum, and 
we also have the blow-up alternative, i.e. if $T<\infty$, then
\[
\lim_{t\to T_-} \| u(t, \cdot)\|_{H^1}=\infty.
\]
\end{proposition}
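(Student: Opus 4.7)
The plan is to apply a Banach fixed-point argument to the Duhamel formulation \eqref{eq:duhamel}, based on Strichartz estimates for the 2D free Schrödinger group, exactly in the spirit of the standard $H^1$-subcritical theory (see, e.g., \cite{CazCourant}). The only non-routine ingredient is the logarithmic factor in $f(z)=z|z|^2\ln|z|^2$: it is not $C^1$ with a polynomial derivative, but for every $\eps\in(0,1)$ one has the elementary pointwise bounds
\begin{equation*}
|f(z)|\lesssim |z|^{3-\eps}+|z|^{3+\eps},\qquad |f(z_1)-f(z_2)|\lesssim\bigl(|z_1|^{2-\eps}+|z_2|^{2-\eps}+|z_1|^{2+\eps}+|z_2|^{2+\eps}\bigr)|z_1-z_2|,
\end{equation*}
so that $f$ can be treated, up to an arbitrarily small loss, as if it were an $H^1$-subcritical power nonlinearity in two separate regimes, one slightly below and one slightly above cubic.

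Concretely, I would fix a small $\eps>0$ and pick two Strichartz-admissible pairs in 2D, namely $(\infty,2)$ together with some $(p,q)$ with $p$ large (depending on $\eps$) and $\tfrac1p+\tfrac1q=\tfrac12$. For $R>\|u_0\|_{H^1}$ I would work in the closed ball
\begin{equation*}
X_T=\bigl\{u\in C([0,T];H^1(\R^2))\cap L^p([0,T];W^{1,q}(\R^2))\ ;\ \|u\|_{L^\infty_T H^1}+\|u\|_{L^p_T W^{1,q}}\le R\bigr\},
\end{equation*}
endowed with the distance induced by the weaker norm $L^\infty_TL^2\cap L^p_TL^q$ in order to avoid having to differentiate $f$ (whose derivative carries an unbounded $\log$-factor). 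The required bounds on $f(u)$ and $\nabla f(u)$ then follow from Hölder in time and the 2D Sobolev embedding $H^1(\R^2)\hookrightarrow L^r(\R^2)$ for every $r<\infty$: each factor $|u|^{2\pm\eps}$ or $|u|^{3\pm\eps}$ is placed in a suitable Lebesgue space, and the remaining time Hölder exponent produces a positive power $T^\alpha$. Choosing $T$ small enough in terms of $R$ and $\|u_0\|_{H^1}$ closes both the self-map property and the strict contraction.

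Uniqueness in the full class $C([0,T];H^1)$ follows by re-applying the Lipschitz-type estimate to two solutions, noting that both automatically lie in the Strichartz space by Strichartz applied to \eqref{eq:duhamel}. The fact that $u\in C^1((0,T);H^{-1})$ is automatic from $\partial_t u=\tfrac{i}{2}\Delta u-i\lambda f(u)\in C((0,T);H^{-1})$, and the blow-up alternative is the usual consequence of the existence time depending only on $\|u_0\|_{H^1}$. Continuous dependence is obtained by applying the contraction argument to two solutions issued from close data. For the conservation of $M$, $P$ and $E$, the nonlinearity is not smooth enough at the origin for a direct differentiation at the $H^1$ level; I would regularize the logarithm by $\ln(|u|^2+\delta)$, solve the regularized equation (for which conservation is immediate since $f_\delta\in C^2$), and pass to the limit $\delta\to 0$ using uniform Strichartz bounds and the continuous dependence.

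The main obstacle, in my view, is the simultaneous tuning of three parameters: the loss $\eps$, the Sobolev exponent $r$ in $H^1\hookrightarrow L^r$, and the admissible pair $(p,q)$. These must be arranged so that \emph{(i)} every nonlinear term lands in the dual Strichartz space, \emph{(ii)} a strictly positive power of $T$ is produced, and \emph{(iii)} the difference estimate involves only the $L^2$-type norm, so that the $\log$-singular derivative $f'$ never acts on the difference $u_1-u_2$. Once these constraints are reconciled, everything else reduces to the standard $H^1$-subcritical machinery in two space dimensions.
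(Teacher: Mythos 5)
Your proposal is correct and follows essentially the same route as the paper: the paper verifies exactly the pointwise bounds you state, namely $|f(u)|\lesssim |u|^{3-\eps}+|u|^{3+\eps}$ and $|\nabla f(u)|\lesssim (|u|^{2-\eps}+|u|^{2+\eps})|\nabla u|$, and then quotes Kato's theorem \cite[Theorem~I]{Kato1987}, whose proof is precisely the Strichartz contraction in a ball measured in the weaker $L^\infty_T L^2\cap L^p_T L^q$ metric that you describe. The only difference is presentational: you unfold the fixed-point machinery and the regularization argument for the conservation laws, whereas the paper outsources both to \cite{Kato1987} and \cite{Oz06}.
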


In view of the fact that \eqref{eq:nls} is time-reversible, we also obtain the analogous statement backward in time.

\begin{proof}
We see that our nonlinearity $f\in C^1(\R^2;\R^2)$ satisfies $f(0)=0$,
\[
| f(u)| \lesssim |u|^{3-\eps} + |u|^{3+\eps} , \quad \forall \eps>0,
\]
as well as
\[
|\nabla f(u)|\le (3 |\ln |u|^2 | +2) |u|^2 |\nabla u| \lesssim (|u|^{2+\eps} + |u|^{2-\eps}) |\nabla u|.
\]
We therefore can simply quote classical results by Kato, in particular
\cite[Theorem~I]{Kato1987} (see also \cite{CazCourant}), to obtain existence and uniqueness of a
strong  
solution $u(t, \cdot)
\in H^1(\R^2)$ to \eqref{eq:duhamel}, up to some (possibly finite) time $T=T(\|u_0\|_{H^1})>0$. 

The proof of the conservation laws for mass, energy and momentum follows along the same lines as in \cite[Theorem~III]{Kato1987} 
(see also \cite{Oz06} for an alternative approach which does not require any additional smoothness of the solution $u$).
\end{proof}
\begin{remark}
It is not clear whether the solution is  arbitrarily smooth or not, in
general, since one can see that the  
third derivative of $f(z)$ becomes singular. See also \cite{CaGa18} in the
case of the (even more singular) nonlinearity $z\ln|z|^2$. 
\end{remark}

\begin{corollary}[Global well-posedness] 
Let $\lambda \ge 0$. Then, the solution is global, i.e. $T=\infty$. 
\end{corollary}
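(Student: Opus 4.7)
The plan is to turn the blow-up alternative in the preceding proposition into a global existence statement by establishing an a priori bound on $\|u(t)\|_{H^1}$ that depends only on conserved quantities. Mass is conserved, so only $\|\nabla u(t)\|_{L^2}$ needs to be controlled, and the obstacle is that the nonlinear part of the energy in \eqref{eq:energy} is sign-indefinite: on the region $\{|u|^2<\sqrt e\}$ the integrand $|u|^4\ln(|u|^2/\sqrt e)$ is negative, and a priori could in principle swallow the positive kinetic term.

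First I would isolate the negative contribution. Writing
\[
E(u)=\frac{1}{2}\|\nabla u\|_{L^2}^2+\frac{\lambda}{2}\int_{|u|^2\ge\sqrt e}|u|^4\ln\!\frac{|u|^2}{\sqrt e}\,dx-\frac{\lambda}{2}\int_{|u|^2<\sqrt e}|u|^4\left|\ln\!\frac{|u|^2}{\sqrt e}\right|dx,
\]
the middle term is nonnegative (since $\lambda\ge 0$), so it may be dropped when bounding $\|\nabla u\|_{L^2}$ from above. For the last term I would use the elementary observation that on $\{|u|^2\le\sqrt e\}$, the function $y\mapsto y^2|\ln(y/\sqrt e)|$ vanishes both at $y=0$ and $y=\sqrt e$; in particular, for any small $\eps>0$ fixed in advance, there exists $C_\eps>0$ with
\[
y^2\left|\ln\!\frac{y}{\sqrt e}\right|\le C_\eps\, y^{2-\eps/2}\quad\text{for all }0\le y\le\sqrt e,
\]
since $y^{\eps/2}|\ln y|$ remains bounded near the origin. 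This yields pointwise the bound $\mathbf 1_{|u|^2<\sqrt e}|u|^4|\ln(|u|^2/\sqrt e)|\le C_\eps |u|^{4-\eps}$.

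Next I would invoke the two-dimensional Gagliardo--Nirenberg inequality: for $p=4-\eps\in(2,4)$,
\[
\|u\|_{L^p}^{p}\le C\,\|u\|_{L^2}^{2}\,\|\nabla u\|_{L^2}^{p-2}=C\,\|u\|_{L^2}^{2}\,\|\nabla u\|_{L^2}^{2-\eps}.
\]
Combined with the previous pointwise bound and mass conservation, this gives
\[
E(u(t))\ge\frac{1}{2}\|\nabla u(t)\|_{L^2}^{2}-C_\eps\, M(u_0)\,\|\nabla u(t)\|_{L^2}^{2-\eps}.
\]
Since the exponent $2-\eps$ is strictly less than $2$, Young's inequality allows the second term to be absorbed into a small multiple of $\|\nabla u(t)\|_{L^2}^{2}$, at the cost of a constant depending only on $M(u_0)$ and $\eps$. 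Combining with energy conservation and rearranging yields a bound
\[
\|\nabla u(t)\|_{L^2}^{2}\le C\bigl(E(u_0),M(u_0)\bigr),\qquad t\in[0,T),
\]
and together with conservation of mass this shows that $\|u(t)\|_{H^1}$ stays bounded on any interval of existence. The blow-up alternative from the local theory then forces $T=\infty$, and time reversibility of \eqref{eq:nls} extends the solution to all of $\R$.

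The only real point that requires care is the absorption step; the key observation is that the negative part of the nonlinearity is effectively $L^2$-subcritical (exponent $4-\eps$ instead of $4$), so it cannot dominate the kinetic energy. This is precisely the manifestation of the stabilizing effect of the LHY log-correction alluded to after Theorem~\ref{thm:gwp}.
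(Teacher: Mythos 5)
Your proof is correct, and it uses the same starting point as the paper (isolating the negative, focusing part of the potential energy supported on $\{|u|^2<\sqrt e\}$ and exploiting $\lambda\ge 0$ to discard the nonnegative part), but it controls that negative part by a genuinely different mechanism. The paper uses the elementary pointwise inequality $\ln s\le s$ with $s=\sqrt e/|u|^2>1$, which gives $|u|^4\ln(\sqrt e/|u|^2)\le \sqrt e\,|u|^2$ directly, so the dangerous term is bounded by $\tfrac{\lambda}{2}\sqrt e\,M(u_0)$ with no interpolation inequality at all; the kinetic energy is then controlled purely by $E(u_0)$ and the conserved mass. You instead bound the integrand by $C_\eps|u|^{4-\eps}$, apply the two-dimensional Gagliardo--Nirenberg inequality at the subcritical exponent $4-\eps$, and absorb via Young's inequality. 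Both arguments are sound and yield the same a priori $H^1$ bound; note that your own pointwise lemma with $\eps=2$ (i.e., $y\,|\ln(y/\sqrt e)|$ bounded on $[0,\sqrt e]$) already reduces the focusing term to a multiple of the mass and would let you skip Gagliardo--Nirenberg and Young entirely, recovering the paper's one-line estimate. What your longer route buys is robustness: it only uses that the focusing part is effectively mass-subcritical, so it would survive in situations where a clean pointwise bound by $|u|^2$ is unavailable, and it makes explicit the ``intercritical'' structure emphasized in the introduction.
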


\begin{proof} Using the conservation laws of mass and energy, together with the fact that $\lambda \ge 0$, 
the positive part of the energy satisfies
\begin{align*}
  E_+(u) :&= \frac{1}{2}\|\nabla u(t, \cdot)\|_{L^2}^2 +\frac{\lambda}{2}
           \int_{|u|^2>\sqrt e}|u(t,x)|^4\ln\(\frac{|u(t,x)|^2}{\sqrt e}\)dx\\
  &=
  E(u_0)+ \frac{\lambda}{2}
    \int_{|u|^2<\sqrt e}|u(t,x)|^4\ln\(\frac{\sqrt e}{|u(t,x)|^2}\) dx\\
  &\le E(u_0) +\frac{\lambda}{2}  \int_{\R^2} |u(t,x)|^4\(\frac{\sqrt
    e}{|u(t,x)|^2}\)dx = E(u_0) +\frac{\lambda}{2}  \sqrt e M(u_0).
\end{align*}
This consequently yields a uniform in-time 
bound on $\|u(t, \cdot)\|_{H^1}$ and thus, the blow-up alternative implies that $T=\infty$.  
\end{proof}

\subsection{Some scattering results} Let us introduce the conformal space 
\[
\Sigma:=\left\{f\in H^1(\R^2),\ x\mapsto |x|f(x)\in
  L^2(\R^2)\right\},\quad  \|f\|_\Sigma =
\|f\|_{H^1(\R^2)}+\left\||x|f\right\|_{L^2(\R^2)}. 
\]

\begin{lemma} Let $u_0\in \Sigma$ and $\lambda \ge 0$, 
then the global in-time solution $u$ obtained above satisfies $u\in C(\R; \Sigma)$.
\end{lemma}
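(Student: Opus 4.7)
The plan is to propagate the $|x|$-weight using the Galilean-type operator $J(t)=x+it\nabla$, which commutes with $i\partial_t+\tfrac12\Delta$ and consequently satisfies the key algebraic identity
\[
J(t)\,e^{i(t-s)\Delta/2}=e^{i(t-s)\Delta/2}\,J(s).
\]
Applying $J(t)$ to Duhamel's formula \eqref{eq:duhamel} turns it into
\[
J(t)u(t)=e^{it\Delta/2}(xu_0)\;-\;i\lambda\int_0^t e^{i(t-s)\Delta/2}\,J(s)f(u(s))\,ds,
\]
and since $xu_0\in L^2(\R^2)$ and the free group is unitary on $L^2$, propagating $\Sigma$-regularity reduces to estimating $J(s)f(u(s))$ in some dual Strichartz space locally uniformly in $s$.

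For the gauge-invariant nonlinearity $f(z)=g(|z|^2)z$ with $g(\sigma)=\sigma\ln\sigma$, a direct computation gives
\[
J(s)f(u)=g(|u|^2)\,J(s)u\;+\;is\,g'(|u|^2)(\nabla|u|^2)\,u.
\]
I would estimate both summands in the dual Strichartz space $L^{4/3}_t L^{4/3}_x$ by H\"older, using the pointwise bound $|g(\sigma)|+\sigma|g'(\sigma)|\lesssim \sigma^{1-\eps}+\sigma^{1+\eps}$ valid for any $\eps\in(0,1)$, the 2D Sobolev embedding $H^1(\R^2)\hookrightarrow L^q(\R^2)$ for every $q<\infty$, and the same splitting $\{|u|\le 1\}\cup\{|u|>1\}$ already exploited in the proof of Theorem~\ref{thm:gwp}. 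Combined with the locally uniform $H^1$-bound on $u$ provided by Theorem~\ref{thm:gwp}, the standard Strichartz estimate applied to the Duhamel identity for $Ju$ gives a short-time a priori bound on $\|Ju\|_{L^\infty_t L^2_x\cap L^4_t L^4_x([0,T])}$ that can be absorbed into itself for $T$ small enough and then iterated, yielding $Ju\in L^\infty_{loc}(\R;L^2)$.

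Writing $xu=J(t)u-it\nabla u$ and using $\nabla u\in C(\R;L^2)$ shows $xu\in L^\infty_{loc}(\R;L^2)$. Strong continuity in $t$ is inherited from the Duhamel representation of $J(t)u$: the linear piece $e^{it\Delta/2}(xu_0)$ is continuous in $L^2$ by strong continuity of the free group, and the nonlinear integral is continuous by dominated convergence using the local-in-time dual Strichartz bound on the integrand. Combined with $u\in C(\R;H^1)$, this gives $u\in C(\R;\Sigma)$.

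The main technical point is the absence of a Lipschitz bound on $\ln|u|^2$ at $|u|=0$ and as $|u|\to\infty$, which prevents a direct H\"older estimate on the term $g(|u|^2)\,J(s)u$; as in Theorem~\ref{thm:gwp}, this is circumvented by the two-region splitting above, where $|\ln|u|^2|$ is absorbed into $|u|^{\pm 2\eps}$ for arbitrarily small $\eps>0$. Full rigor is ensured by first running the argument with smooth compactly supported initial data (for which the formal virial computation $V'(t)=2\int x\cdot\IM(\bar u\nabla u)\,dx$ is also justified) and then passing to the limit via the continuous dependence supplied by Theorem~\ref{thm:gwp}.
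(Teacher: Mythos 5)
Your proof is correct, but it follows a genuinely different route from the paper. The paper does not run a Duhamel/Strichartz bootstrap for $J(t)u=x+it\nabla u$: it establishes the pseudo-conformal conservation law
\begin{equation*}
  \frac{d}{dt}\(\frac{1}{2}\|(x+it\nabla )u\|_{L^2}^2 +\frac{\lambda
    t^2}{2}\int_{\R^2} |u|^4 \ln \(\frac{|u|^2}{\sqrt e}\)dx \)
  =-\lambda t \int_{\R^2}|u|^4\,dx,
\end{equation*}
notes that the right-hand side is nonpositive for $t,\lambda\ge 0$, and then controls the negative part of the weighted potential term by $\sqrt e\, M(u_0)$, exactly as in the global-existence argument --- this gives the $L^2$ bound on $Ju$ directly, and $xu=Ju-it\nabla u$ finishes the proof. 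Your approach instead applies $J$ to the Duhamel formula, uses the gauge-invariance identity $Jf(u)=g(|u|^2)Ju+is\,g'(|u|^2)(\nabla|u|^2)u$ together with the bound $|s\nabla|u|^2|\lesssim |u||Ju|$ coming from the factorization of $J$, and closes a contraction/bootstrap in Strichartz norms on time intervals whose length depends only on the (globally controlled) $H^1$ norm. The trade-off: the paper's monotonicity argument is shorter and yields a clean quantitative bound $\|Ju(t)\|_{L^2}^2\lesssim 1+t^2$, but it genuinely uses $\lambda\ge 0$; your local propagation argument works for either sign of $\lambda$ on the lifespan of the $H^1$ solution, at the cost of more Strichartz bookkeeping. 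One presentational quibble: you should state explicitly that the factor $s$ in the second summand of $Jf(u)$ is absorbed via $|s\,\nabla|u|^2|\le 2|u|\,|Ju|$ (using $J(t)u = it\, e^{i|x|^2/(2t)}\nabla(u e^{-i|x|^2/(2t)})$), since otherwise that term looks like it requires a bound on $s\nabla u$ rather than on $Ju$; and the aside about the virial identity is not needed for your argument.
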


\begin{proof} We introduce the Galilean operator $J(t)=x+it\nabla$, 
which commutes with the free Schr\"odinger equation, i.e.
\begin{equation*}
  \big[J,i\d_t+\tfrac{1}{2}\Delta\big]=0.
\end{equation*}
A direct computation then yields the pseudo-conformal conservation law
\begin{equation*}
  \frac{d}{dt}\(\frac{1}{2}\|(x+it\nabla )u\|_{L^2}^2 +\frac{\lambda
    t^2}{2}\int_{\R^2} |u(t,x)|^4 \ln \(\frac{|u(t,x)|^2}{\sqrt e}\)dx \)
  =-\lambda t \int_{\R^2}|u(t,x)|^4dx.
\end{equation*}
In particular if $\lambda \ge 0$, the same type of argument as in the proof above yields 
that $\|(x+it\nabla )u\|_{L^2}$ is uniformly bounded for all $t\ge 0$. A triangle inequality then implies that $u(t, \cdot)\in \Sigma$.
\end{proof}

\begin{proposition}
\emph{Existence of wave operators:}  If $u_-\in \Sigma$, then there exist $u_0\in \Sigma$ and  $u\in
  C(\R;\Sigma)$ solving \eqref{eq:nls} such that
\begin{equation*}
 \left\| e^{-i\frac{t}{2}\Delta} u(t, \cdot)-
      u_-\right\|_\Sigma \Tend t {-\infty} 0. 
\end{equation*}
 \emph{Small data scattering:} If $u_0\in \Sigma$ and $\|u_0\|_\Sigma$ is sufficiently small, then there exists $u_+\in \Sigma$, such that
\begin{equation*}
 \left\| e^{-i\frac{t}{2}\Delta} u(t, \cdot)-
      u_+\right\|_\Sigma \Tend t \infty 0. 
\end{equation*}
\end{proposition}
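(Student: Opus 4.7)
The plan is to construct both the wave operator and the scattering state by fixed-point arguments in Strichartz spaces equipped with the Galilean vector field $J(t)=x+it\nabla$, which commutes with $i\d_t+\tfrac12\Delta$. The two analytic tools I would rely on are the standard 2D Strichartz inequalities, applied to both $u$ and $Ju$, together with the Gagliardo--Nirenberg type estimate
\begin{equation*}
  \|u(t)\|_{L^r(\R^2)}\lesssim |t|^{-(1-2/r)}\|J(t)u(t)\|_{L^2}^{1-2/r}\|u(t)\|_{L^2}^{2/r},\qquad r>2,
\end{equation*}
obtained from the factorization $J(t)=ite^{i|x|^2/(2t)}\nabla(e^{-i|x|^2/(2t)}\,\cdot\,)$ combined with the usual Gagliardo--Nirenberg inequality in $\R^2$. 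The pseudo-conformal identity of the preceding lemma gives $\sup_t\|J(t)u\|_{L^2}<\infty$ along any $\Sigma$-solution when $\lambda\ge 0$, which then yields the pointwise decay $\|u(t)\|_{L^r}\lesssim |t|^{-(1-2/r)}$ needed to make the nonlinearity integrable in time.

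For the wave operator, given $u_-\in\Sigma$ I would realize $u$ as the fixed point of
\begin{equation*}
  \Phi(u)(t)=e^{i(t/2)\Delta}u_- -i\lambda\int_{-\infty}^t e^{i(t-s)\Delta/2}f(u(s))\,ds
\end{equation*}
on a ball in $C((-\infty,-T];\Sigma)$ intersected with appropriate Strichartz norms controlling $u$, $\nabla u$ and $Ju$, for $T$ sufficiently large. Splitting $f(u)=f_1(u)+f_2(u)$ with $f_1$ supported where $|u|\le 1$ and $f_2$ where $|u|>1$, one has $|f_1(u)|\lesssim|u|^{3-\eps}$ and $|f_2(u)|\lesssim|u|^{3+\eps}$, with the analogous bounds for $\nabla f$ and $Jf$. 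Each piece is estimated using a distinct Strichartz admissible pair close to the $L^2$-critical pair $(q,r)=(4,4)$, chosen so that the decay above produces a negative power of $T$ in the nonlinear estimate. Taking $T$ large enough closes the contraction on $(-\infty,-T]$ and produces a solution that scatters to $u_-$ in $\Sigma$. Extending $u$ to all of $\R$ via the global well-posedness of Theorem~\ref{thm:gwp} and using the preceding lemma to verify $u\in C(\R;\Sigma)$ completes this part.

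For small-data scattering, the same contraction scheme is run on all of $\R$ at once, the smallness of $\|u_0\|_\Sigma$ now playing the role of the large-$T$ gain. Once the Strichartz norms of $u$ and $Ju$ are shown to be finite globally, the Cauchy criterion on $e^{-it\Delta/2}u(t)$ as $t\to+\infty$ follows by applying Strichartz duality to $\int_{t_1}^{t_2}e^{-is\Delta/2}f(u(s))\,ds$. The genuinely delicate point throughout is the logarithmic factor, which destroys both scaling homogeneity and global Lipschitz control of $f$; this is absorbed by the two-scale splitting into the exponents $3\pm\eps$. Since $3$ is exactly $L^2$-critical in 2D, any small $\eps>0$ is in principle admissible, but one must verify that the Strichartz pairs used for $f_1$ and $f_2$ are compatible with the spacetime norms appearing in the derivative and Galilean estimates on the nonlinearity, which is where most of the technical care is spent.
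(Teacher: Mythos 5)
Your argument is essentially the paper's: the authors' own proof is a sketch that invokes exactly the factorization $J(t)=ite^{i|x|^2/(2t)}\nabla(e^{-i|x|^2/(2t)}\,\cdot\,)$, the resulting Gagliardo--Nirenberg inequality adapted to $J$, and the same Strichartz-based fixed point scheme near $t=-\infty$, with the nonlinearity controlled via $|f(u)|\lesssim|u|^{3-\eps}+|u|^{3+\eps}$. The only difference is that for small-data scattering you run the global contraction by hand, whereas the paper simply cites \cite[Theorem~2.1]{NakanishiOzawa}, which is the published form of that very argument; your version is correct but reproves what is being quoted.
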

\begin{proof}[Sketch of the proof]
Recall that 
\begin{equation*}\label{eq:factor}
 J(t)u = it\, e^{i|x|^2/(2t)}\nabla\(u e^{-i|x|^2/(2t)}\),
\end{equation*}
which implies that $J(t)u$ can be estimated like $\nabla u$ in $L^p$. Using this, one obtains the Gagliardo--Nirenberg type inequality 
adapted to $J(t)$, i.e.
\begin{equation*}
  \|u\|_{L^p(\R^2)}\lesssim
  \frac{1}{t^{1-2/p}}\|u\|_{L^2(\R^2)}^{2/p}
  \|(x+it\nabla )u\|_{L^2(\R^2)}^{1-2/p},\quad 2\le p<\infty.
\end{equation*}
Essentially the same fixed point argument as the one used in solving the Cauchy
problem locally in-time then yields the  existence of wave operators
(see e.g. \cite{CazCourant}). Small data scattering then follows directly from \cite[Theorem~2.1]{NakanishiOzawa}. 
\end{proof}

\begin{remark}
  The existence of wave operators under the mere assumption $u_-\in H^1(\R^2)$ is very
  delicate, since the present nonlinearity can be understood as the
  sum of a slightly $L^2$-subcritical (focusing) nonlinearity and a
  slightly $L^2$-supercritical (defocusing) nonlinearity. The
  existence of wave operators in $H^1$ is known for
  $L^2$-supercritical defocusing nonlinearities, but not for
  $L^2$-subcritical ones. Also, the smallness in $\Sigma$ is necessary
  to have scattering, in the sense that smallness in $H^1(\R^2)$ is not
  enough, see also Remark~\ref{rem:smallness}. 
\end{remark}


\section{Nonlinear ground states}\label{sec:ground}

\subsection{Necessary and sufficient conditions for the existence of ground states} 
We seek solutions to \eqref{eq:nls} in the form $u(t,x) =e^{i\omega t}\phi(x)$, with $\omega \in \R$ and $\phi$ sufficiently smooth and localized. 
Then $\phi$ solves
\begin{equation}
  \label{eq:soliton}
  -\Delta \phi =g(\phi),\quad \text{on $\R^2$},
\end{equation}
where here, and in the following, we shall denote (in agreement with the notations from \cite{BGK83,BL83a}):
\begin{equation}\label{eq:g}
  g(\phi) = -2\omega \phi -2\lambda|\phi|^2\phi\ln|\phi|^2,\quad G(z):=
  \int_0^z g(s)\, ds.
\end{equation}
We also define the quantities
\begin{equation*}
  T(\phi) := \int_{\R^2}|\phi(x)|^2dx,\quad V(\phi) := \int_{\R^2}G\(\phi(x)\)dx,
\end{equation*}
which allow us to rewrite the Lagrangian action as
\begin{equation}\label{eq:lagrange}
  S(\phi) = \frac{1}{2}T(\phi) -V(\phi).
\end{equation}

In a first step, we shall derive certain necessary conditions for solution $\phi$ to \eqref{eq:soliton}.

\begin{lemma}[Pohozaev identities] Any solution $\phi \in H^1(\R^2)$ to \eqref{eq:soliton} satisfies
\begin{equation}
  \label{eq:phi1}
  \frac{1}{2}\int_{\R^2}|\nabla \phi|^2\, dx
  +\lambda\int_{\R^2}|\phi|^4\ln|\phi|^2 \, dx+\omega \int_{\R^2}|\phi|^2 \, dx= 0,
\end{equation}
as well as
 \begin{equation} \label{eq:phi2}
  \frac{1}{2}\int_{\R^2}|\nabla
   \phi|^2\, dx+\frac{\lambda}{2}\int_{\R^2}|\phi|^4 \, dx= \omega \int_{\R^2}|\phi|^2\, dx.
\end{equation}
Moreover, in order to have a nontrivial solution $\phi\not \equiv 0$, a necessary condition on the frequency $\omega \in \R$ is
\begin{equation*}
  0<\omega<\frac{\lambda}{2\sqrt e}.
\end{equation*}
\end{lemma}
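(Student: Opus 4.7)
The plan is to follow the classical Berestycki--Lions approach: produce both integral identities by testing \eqref{eq:soliton} against carefully chosen multipliers, and then read off the two constraints on $\omega$ from the algebraic structure of the resulting identities. Before any testing, one must know that $\phi$ is regular enough and decays fast enough to justify all integrations by parts, in particular the $|x|$-weighted one underlying Pohozaev. Since $|g(\phi)|\lesssim |\phi|^{3-\eps}+|\phi|^{3+\eps}$, a standard bootstrap from elliptic regularity gives $\phi\in C^2(\R^2)$, while the comparison arguments of \cite{BL83a} yield exponential decay of both $\phi$ and $\nabla\phi$.

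Identity \eqref{eq:phi1} falls out of multiplying \eqref{eq:soliton} by $\bar\phi$, integrating over $\R^2$, and inserting the explicit form of $g$ from \eqref{eq:g}. For the Pohozaev identity, I would test \eqref{eq:soliton} against $\RE(x\cdot\nabla\bar\phi)$. The classical integration by parts gives
\begin{equation*}
\RE\int_{\R^2}(-\Delta\phi)\,\overline{x\cdot\nabla\phi}\,dx=\frac{n-2}{2}\int_{\R^2}|\nabla\phi|^2\,dx,
\end{equation*}
which vanishes in dimension $n=2$, while the right-hand side of \eqref{eq:soliton} tested the same way integrates by parts (using the gauge-invariant form of $g$) to $-n\int G(\phi)\,dx=-2V(\phi)$. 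Hence $V(\phi)=0$. Direct antidifferentiation of \eqref{eq:g} yields
\begin{equation*}
G(\phi)=-\omega|\phi|^2-\frac{\lambda}{2}|\phi|^4\ln|\phi|^2+\frac{\lambda}{4}|\phi|^4,
\end{equation*}
so the Pohozaev identity $V(\phi)=0$ reads
\begin{equation*}
\omega M(\phi)+\frac{\lambda}{2}\int_{\R^2}|\phi|^4\ln|\phi|^2\,dx=\frac{\lambda}{4}\int_{\R^2}|\phi|^4\,dx.
\end{equation*}
Eliminating the logarithmic quartic integral between this identity and \eqref{eq:phi1} then produces \eqref{eq:phi2}.

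The two bounds on $\omega$ now come out directly. From \eqref{eq:phi2}, the right-hand side is strictly positive for any $\phi\not\equiv 0$ (as $\lambda>0$), which forces $\omega>0$. For the upper bound, I would argue that $V(\phi)=0$ with $\phi\not\equiv 0$ forces the real-valued function
\begin{equation*}
h(t):=-\omega t-\frac{\lambda}{2}t^2\ln t+\frac{\lambda}{4}t^2,\qquad t\ge 0,
\end{equation*}
to be strictly positive somewhere on $(0,\infty)$: otherwise $h\le 0$ on $[0,\infty)$, and since $h$ is analytic with only isolated zeros, continuity of $|\phi|^2$ would then make $V(\phi)=0$ force $\phi\equiv 0$. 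A one-variable calculus computation shows that $\max_{t>0}h(t)/t$ is attained at $t_*=1/\sqrt e$ with value $\lambda/(2\sqrt e)-\omega$, and requiring this maximum to be strictly positive gives $\omega<\lambda/(2\sqrt e)$. The only technically delicate step I anticipate is the justification of the $|x|$-weighted Pohozaev manipulation for $H^1$ solutions, but this is routine given the regularity and decay noted above.
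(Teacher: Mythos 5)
Your derivation of \eqref{eq:phi1}, of the dilation identity $V(\phi)=0$, and of \eqref{eq:phi2} by eliminating the logarithmic quartic term is exactly the paper's argument, as is the deduction of $\omega>0$ from the positivity of the left-hand side of \eqref{eq:phi2}. Where you genuinely diverge is the upper bound on $\omega$. The paper stays inside the Pohozaev identity: writing it as $\tfrac{\lambda}{2}\int|\phi|^4\ln\tfrac{|\phi|^2}{\sqrt e}+\omega\int|\phi|^2=0$, it splits the logarithmic term according to the sign of $\ln\tfrac{|\phi|^2}{\sqrt e}$ and controls the focusing part pointwise by $\tfrac{1}{\sqrt e}|\phi|^2$ via the elementary bound $\sup_{0<z<1}z\ln\tfrac1z=\tfrac1e$; since the defocusing part is nonnegative, this forces $\omega<\tfrac{\lambda}{2\sqrt e}$. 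You instead observe that $V(\phi)=0$ with $\phi\not\equiv 0$ forces $G$ to be positive somewhere --- the planar necessity of the Berestycki--Lions condition $(g.2)$ --- using analyticity of $h$, continuity of $|\phi|^2$ and connectedness of $\R^2$, and then compute $\sup_{t>0}h(t)/t=\tfrac{\lambda}{2\sqrt e}-\omega$ (both arguments single out the same critical density $|\phi|^2=e^{-1/2}$). Your route is correct and yields the same sharp constant; its added value is that it identifies the threshold as precisely the failure of $(g.2)$, so that together with Proposition~\ref{prop:ex} the condition on $\omega$ is visibly necessary \emph{and} sufficient, whereas the paper's version is a shorter pointwise estimate needing no topological input.

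One caveat concerns your justification of the weighted integration by parts. You propose to legitimize it through exponential decay of $\phi$ and $\nabla\phi$ obtained by comparison arguments, but those comparison arguments require $\omega>0$, which is one of the conclusions of the lemma; for $\omega\le 0$ this would be circular. The standard repair (and presumably what the paper's ``limiting argument'' refers to) is to test \eqref{eq:soliton} against $\chi(x/R)\,x\cdot\nabla\bar\phi$ for a smooth cutoff $\chi$ and let $R\to\infty$, using only $\phi\in H^2_{\mathrm{loc}}\cap H^1$ and $G(\phi)\in L^1$, with no decay assumption beyond membership in $H^1$. Finally, the kinetic identity should read $\tfrac{2-n}{2}\int|\nabla\phi|^2$ rather than $\tfrac{n-2}{2}\int|\nabla\phi|^2$; this sign is immaterial since both vanish for $n=2$.
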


\begin{proof}
First, assume that $\phi$ is sufficiently smooth and rapidly decaying as $|x|\to \infty$. Then we directly obtain \eqref{eq:phi1} by multiplying \eqref{eq:soliton} with $\bar \phi$ and integrating w.r.t. $x\in \R^2$. 
To obtain \eqref{eq:phi2}, we instead multiply by \eqref{eq:soliton} with $x\cdot \nabla \bar \phi$. Integration in $x$ then yields
\begin{equation}
  \label{eq:phi2a}
 \frac{\lambda}{2}\int_{\R^2}|\phi|^4\ln|\phi|^2\, dx
 -\frac{\lambda}{4}\int_{\R^2}|\phi|^4\, dx+\omega \int_{\R^2}|\phi|^2\, dx= 0,
\end{equation}
or, in other words, $V(\phi)=0$.
By taking \eqref{eq:phi1}$-2\times$\eqref{eq:phi2a} we infer \eqref{eq:phi2} for sufficiently ``nice" $\phi$, 
and a limiting argument allows us to extend this result to general $\phi \in H^1(\R^2)$. 
In particular, \eqref{eq:phi2} also implies that $\omega >0$ is necessary for nontrivial $\phi$.

Next, we consider, for $\eps>0$:
\begin{equation*}
  c_\eps = \sup_{0<z<1}z^\eps \ln \frac{1}{z}.
\end{equation*}
Introducing $f_\eps(z) =z^\eps \ln \frac{1}{z}$ and computing its
derivative, we find that
\begin{equation*}
  c_\eps = f_\eps\( e^{-1/\eps}\) = \frac{1}{\eps e}.
\end{equation*}
Taking $\eps=1$, we infer
\begin{equation*}
 0\le \int_{|\phi|^2<\sqrt{e}}|\phi|^4\ln\frac{\sqrt e}{|\phi|^2}\, dx\le
 \frac{1}{\sqrt e} \int_{\R^2} |\phi|^2 \, dx. 
\end{equation*}
Using this within the Pohozaev identity \eqref{eq:phi2a}, which we can be rewritten as
\[
\frac{\lambda}{2}\int_{\R^2}|\phi|^4\ln\frac{|\phi|^2}{\sqrt e}\, dx+\omega \int_{\R^2}|\phi|^2\, dx= 0,
\]
then yields
\begin{equation*}
  \frac{\lambda}{2}\int_{|\phi|^2\ge \sqrt e}
  |\phi|^4\ln\frac{|\phi|^2}{\sqrt e} \, dx+\omega
  \int_{\R^2}|\phi|^2  \, dx= \frac{\lambda}{2}\int_{|\phi|^2< \sqrt e }
  |\phi|^4\ln\frac{\sqrt e}{|\phi|^2}\, dx\le \frac{\lambda}{2\sqrt e}\| \phi\|_{L^2}^2.
\end{equation*}
Since the l.h.s. is the sum of two positive terms (unless $\phi \equiv 0$), this yields the condition that $0<\omega<\tfrac{\lambda}{2\sqrt e}$. 
\end{proof}
Next, we shall show that the necessary condition on $\omega$ obtained above is also sufficient for the existence of positive ground states.

\begin{proposition}[Existence of ground states]\label{prop:ex}
Let $0<\omega<\tfrac{\lambda}{2\sqrt e}$.
Then \eqref{eq:soliton} has a solution $\phi_\omega$, such that:
\begin{enumerate}
\item $\phi_\omega>0$ on $\R^2$.
  \item $\phi_\omega$ is radially symmetric, i.e., $\phi_\omega=\phi_\omega(r)$ with $r=|x|$, and non-increasing.
  \item $\phi_\omega\in C^2(\R^2)$.
    \item The derivatives of $\phi_\omega$ up to order two decay
      exponentially, i.e.,
      \begin{equation*}
        \exists \delta>0,\quad |\d^\alpha \phi_\omega(x)|\lesssim
        e^{-\delta|x|},\quad |\alpha|\le 2.
      \end{equation*}
    \item For every solution $\varphi$ to \eqref{eq:soliton}, we have
      \begin{equation*}
        0< S(\phi_\omega)\le S(\phi),
      \end{equation*}
 where $S$ is the Lagrangian defined in \eqref{eq:lagrange}.     
\end{enumerate}
\end{proposition}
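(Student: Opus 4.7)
The plan is to recast \eqref{eq:soliton} as a semilinear scalar field equation $-\Delta \phi = g(\phi)$ on $\R^2$, with $g$ and $G$ as in \eqref{eq:g}, and to apply the two-dimensional Berestycki--Lions/Berestycki--Gallou\"et--Kavian existence theory (see \cite{BL83a, BGK83}). Extending $g$ as an odd function of $s$, one must verify: (i) $g \in C^1(\R)$ with $g(0) = 0$ and $\lim_{s \to 0} g(s)/s = -2\omega < 0$, since $s^2 \ln s^2 \to 0$; (ii) the Trudinger--Moser subcritical growth $g(s) = o(e^{\alpha s^2})$ for every $\alpha > 0$, which is immediate because $g$ grows only polynomially up to a logarithmic factor; and (iii) the existence of some $\zeta > 0$ with $G(\zeta) > 0$. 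The last hypothesis is the delicate one: a direct integration gives
\[
G(z) = -\omega z^2 - \frac{\lambda z^4}{2}\ln\frac{z^2}{\sqrt e},
\]
and optimizing in $z^2 > 0$ shows that $\sup_{z > 0} G(z) > 0$ \emph{if and only if} $\omega < \lambda/(2\sqrt e)$. This is the same extremal computation as in the Pohozaev lemma above, so the range of $\omega$ in the proposition is exactly the admissible one.

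With these hypotheses in hand, the BL/BGK theorem produces a positive, radially symmetric, radially non-increasing weak solution $\phi_\omega \in H^1(\R^2)$ of \eqref{eq:soliton}, obtained as a minimizer of an appropriate constrained variational problem. To identify $\phi_\omega$ as an action-minimizer among all nontrivial solutions, I would combine this variational characterization with the Pohozaev identity $V(\phi) = 0$ established in the preceding lemma: on the set of nontrivial solutions one has $S(\phi) = \tfrac{1}{2} T(\phi)$, and the standard BL/BGK scaling/comparison argument then forces the constructed solution to realize the minimum. For the higher regularity, a routine bootstrap suffices: since $H^1(\R^2) \hookrightarrow L^p(\R^2)$ for every $p < \infty$, one has $g(\phi_\omega) \in L^p_{\mathrm{loc}}$, hence $\phi_\omega \in W^{2,p}_{\mathrm{loc}} \subset C^{1,\alpha}_{\mathrm{loc}}$ by Calder\'on--Zygmund, and then $\phi_\omega \in C^2(\R^2)$ by one more application of Schauder theory, using that $g \in C^1$.

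For the exponential decay, the key observation is that radial $H^1$ functions on $\R^2$ tend to zero at infinity, so $g(\phi_\omega)/\phi_\omega = -2\omega - 2\lambda \phi_\omega^2 \ln \phi_\omega^2 \to -2\omega < 0$ as $|x| \to \infty$. Fixing a small $\eps > 0$, outside a sufficiently large ball $\phi_\omega$ is thus a positive subsolution of the linear equation $(-\Delta + (2\omega - \eps))u = 0$, and a comparison with the Yukawa-type barrier $C e^{-\delta|x|}$, for any $\delta < \sqrt{2\omega - \eps}$, via the weak maximum principle yields the claimed pointwise exponential decay of $\phi_\omega$; the same bound then propagates to $\partial^\alpha \phi_\omega$ for $|\alpha| \le 2$ by interior elliptic estimates on unit balls shifted to large $|x|$. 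The step I expect to be the main obstacle is the 2D-specific identification of $\phi_\omega$ as a least-action solution among \emph{all} nontrivial solutions: the higher-dimensional Berestycki--Lions scaling argument crucially exploits that $\phi(\cdot/\mu)$ changes $T$ and $V$ with different homogeneities, whereas in $\R^2$ it leaves $T$ invariant, so the constrained variational formulation and the Lagrange-multiplier analysis have to be handled with care. The verification of hypothesis (iii) with the sharp threshold $\omega < \lambda/(2\sqrt e)$ is the other place where the specific structure of the log-modified nonlinearity genuinely enters the argument.
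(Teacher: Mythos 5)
Your proposal follows essentially the same route as the paper: both reduce \eqref{eq:soliton} to the two-dimensional scalar field framework of \cite{BGK83}, with the decisive step being the verification that $\sup_{z>0}G(z)>0$, attained at $z^*=e^{-1/4}$ and positive precisely when $0<\omega<\lambda/(2\sqrt e)$; the remaining hypotheses (odd continuous $g$, $\lim_{s\to0}g(s)/s=-2\omega<0$, subexponential growth) are checked identically. The paper simply quotes \cite{BGK83} for the least-action property, regularity, and radial monotonicity, and invokes standard ODE arguments from \cite{BL83a} for the exponential decay, whereas you spell out the bootstrap and barrier arguments in more detail --- but this is elaboration, not a different method.
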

\begin{proof} 
With the exception of the exponential decay asserted in $(4)$, this statement is a direct quotation of \cite[Th\'eor\`eme~1]{BGK83}. We therefore only need to check that 
the function $g$, defined in \eqref{eq:g}, satisfies the conditions $(g.0)-(g.3)$ imposed in \cite{BGK83}.
To this end, we first note that the function $g\in C(\R;\R)$ is obviously odd, and that
 \begin{equation*}
   \lim_{s\to 0} \frac{g(s)}{s}=-2\omega<0,\quad \text{since }\omega>0.
 \end{equation*}
Thus $(g.0)$ and $(g.2)$ are indeed satisfied. In addition, we see that that $g$ is sub-exponential at infinity, hence satisfying condition $(g.3)$. It remains to check $(g.2)$: an integration
 by parts yields, for $z>0$,
 \begin{equation*}
   G(z) = -\omega z^2 -4\lambda\int_0^z s^3\ln s \, ds =  -\omega z^2
   -\lambda z^4\ln z
   +\lambda \frac{z^4}{4}=-\omega z^2 -\frac{\lambda}{2} z^4\ln
   \frac{z^2}{\sqrt e}. 
 \end{equation*}
The map $z\mapsto \tfrac{z^2}{4}- z^2\ln z$ reaches its maximum at $z^*=
e^{-1/4}$, and
\begin{equation*}
  G\(e^{-1/4}\) =\frac{1}{\sqrt e}\( -\omega +\frac{\lambda}{2\sqrt e}\)>0,
\end{equation*}
by our assumption on $\omega$. Therefore, also $(g.2)$ is satisfied and we obtain our result. 
Finally, the exponential decay of the solution (together with its derivatives) follows from standard arguments for ordinary differential equations, see, e.g., \cite[Section 4.2]{BL83a}. 
\end{proof}

\subsection{Uniqueness and further properties}\label{sec:prop}

Having obtained existence of nonlinear ground states, we shall now derive further properties for them.

\begin{lemma}[$L^\infty$-bound]
Let $\phi_\omega$ be a nonlinear ground state. 
Then there exists a unique $z_\omega \in (\tfrac{1}{e}, 1)$, satisfying $z_\omega \to 1$ as $\omega\to 0_+$, such that 
\[
0<\phi_\omega(x)<\sqrt{z_\omega},  \ \text{for all $x\in \R^2$,} 
\]
\end{lemma}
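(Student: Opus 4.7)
The plan is to define $z_\omega$ as the larger of the two roots in $(0,1)$ of the algebraic equation $\omega + \lambda z \ln z = 0$, and then to show that $M := \phi_\omega(0)$ (which equals $\|\phi_\omega\|_{L^\infty}$ by radial monotonicity) strictly satisfies $M^2 < z_\omega$. First I observe that $h(z):=z\ln z$ is strictly convex on $(0,\infty)$, with $h(0^+)=h(1)=0$ and minimum $h(1/e)=-1/e$. The hypothesis $\omega<\lambda/(2\sqrt e)$ gives $-\omega/\lambda > -1/(2\sqrt e) > -1/e$, so the level set $h(z)=-\omega/\lambda$ has exactly two preimages, one in $(0,1/e)$ and one in $(1/e,1)$; $z_\omega$ is defined as the latter. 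Continuity of $h$ together with $h(1)=0$ immediately yields $z_\omega\to 1$ as $\omega\to 0_+$.

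Next, evaluating \eqref{eq:ground} at $r=0$ and using $\Delta\phi_\omega(0)\le 0$ (since $0$ is a maximum point), after dividing by $M>0$, produces
$$\omega+\lambda M^2 \ln M^2 \le 0,$$
i.e.\ $h(M^2)\le -\omega/\lambda$. By convexity of $h$, this confines $M^2$ to the closed interval bounded by the two roots of $h=-\omega/\lambda$; in particular $M^2\le z_\omega$.

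To upgrade to the strict inequality, I argue by contradiction: assume $M^2=z_\omega$. Then $F(M)=0$, where $F(\phi):=2\omega\phi+2\lambda\phi^3\ln\phi^2=\Delta\phi$, so the constant $\phi\equiv M$ solves the radial Cauchy problem $\phi''+r^{-1}\phi'=F(\phi)$ with data $\phi(0)=M$, $\phi'(0)=0$. On the other hand, smoothness of $\phi_\omega$ and radial symmetry force an expansion $\phi_\omega(r)=M+a_2 r^2+a_4 r^4+\cdots$, whose coefficients are determined recursively by $4a_2=F(M)$, $16 a_4=F'(M)a_2$, and so on; the vanishing of $F(M)$ then forces $a_{2k}=0$ for every $k\ge 1$. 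Thus $\phi_\omega\equiv M$ in a neighborhood of the origin, hence on all of $\R^2$ by real-analyticity (valid since $F$ is real-analytic on $\{\phi>0\}$ and $\phi_\omega>0$ everywhere by Proposition~\ref{prop:ex}), contradicting the exponential decay. Therefore $M^2<z_\omega$, and $0<\phi_\omega(x)\le M<\sqrt{z_\omega}$ for every $x\in\R^2$.

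The main obstacle I anticipate is making the endpoint exclusion completely airtight: the Taylor/unique-continuation step works because $\phi_\omega$ is real-analytic where positive, but this requires a small bootstrap from the $C^2$ regularity stated in Proposition~\ref{prop:ex}. A strong-maximum-principle alternative for $w:=M-\phi_\omega\ge 0$ with $w(0)=0$ leads to an equation $-\Delta w+c(x)w=O(w^2)$ in which the linearized coefficient $F'(M)=4(\lambda z_\omega-\omega)$ is positive but the quadratic residual has indefinite sign, which is more awkward to handle than the direct radial-ODE argument.
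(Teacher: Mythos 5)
Your proof follows the paper's approach for the core of the argument: evaluate the ground-state equation at the origin (the maximum, by radial monotonicity), use $\Delta\phi_\omega(0)\le 0$ to get $h(\phi_\omega(0)^2)\le -\omega/\lambda$ with $h(z)=z\ln z$, and define $z_\omega$ as the root of $h(z)=-\omega/\lambda$ in $(\tfrac1e,1)$, which exists and is unique because $-\omega/\lambda>-\tfrac1e$ under the standing hypothesis on $\omega$. The one genuine difference is that you notice, correctly, that this argument only yields $\phi_\omega(0)^2\le z_\omega$ (and, via convexity of $h$, membership of $\phi_\omega(0)^2$ in the interval between the two roots), whereas the lemma asserts the strict bound $\phi_\omega<\sqrt{z_\omega}$. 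The paper's proof stops at the non-strict inequality and does not address this endpoint; your exclusion of the case $\phi_\omega(0)^2=z_\omega$ --- observing that equality makes the constant $\phi\equiv\phi_\omega(0)$ a solution of the radial ODE with the same Cauchy data at $r=0$, forcing $\phi_\omega$ to be constant and contradicting decay --- is a correct and worthwhile supplement. (For the rigidity step, rather than the Taylor-coefficient recursion, it is slightly cleaner to invoke uniqueness for the singular initial value problem $\phi''+r^{-1}\phi'=F(\phi)$, $\phi(0)=M$, $\phi'(0)=0$, via the integral formulation $\phi'(r)=r^{-1}\int_0^r sF(\phi(s))\,ds$ and a contraction/Gronwall argument, which only needs $F$ locally Lipschitz near $M>0$ and avoids the analyticity bootstrap you flag as the delicate point.)
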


\begin{proof} In view of Proposition \ref{prop:ex}, we know that $\phi_\omega=\phi_\omega(r)>0$ reaches its
  maximum at zero, $\Delta \phi_\omega(0)\le 0$, thus
  \begin{equation*}
    \lambda \phi_\omega^3\ln \phi_\omega^2 +\omega {\phi_\omega}_{\mid r=0}\le 0.
  \end{equation*}
  Therefore, since $\phi_\omega(0)>0$,
  \begin{equation*}
 z\ln z \le -\frac{\omega}{\lambda},\quad \text{where} \quad z=\phi_\omega(0)^2.
  \end{equation*}
  The map $z\mapsto z\ln z$ is negative exactly on $(0,1)$, and
  reaches its minimum value $-\tfrac{1}{e}$ at $z_\ast=\tfrac{1}{e}$. Since $\omega \in (0,\tfrac{\lambda}{2\sqrt e})$ 
  by assumption, there exists a unique $z_\omega\in (\tfrac{1}{e},1)$  
  such that
  \begin{equation*}
z_\omega\ln z_\omega = -\frac{\omega}{\lambda},
  \end{equation*}
and $z_\omega\to 1$ as $\omega\to 0$. 
\end{proof}

\begin{remark} The proof can be generalized to any sufficiently smooth solution $\phi$ to \eqref{eq:soliton}, not necessarily radial and decreasing. 
Indeed, the same argument as above shows that at any point $x_0\in \R^2$
  where $|\phi|$ reaches its maximum: $|\phi(x_0)|\le \sqrt{z_\omega}$. Hence,
  the above estimate generalizes to
  \begin{equation*}
   |\phi(x)|\le \sqrt{z_\omega},\quad \forall x\in \R^2,
 \end{equation*}
 as soon as $\phi\in C^2(\R^2)$ solves \eqref{eq:soliton}. 
    In particular, $|\phi(x)|<1$ for all $x\in \R^2$, hence $\ln
    |\phi|^2<0$, i.e., the nonlinearity can be considered fully focusing. 
\end{remark}

We now turn to the question of uniqueness of nonlinear ground states.

\begin{lemma}[Uniqueness]
There exists at most one positive solution $\phi_\omega$ to \eqref{eq:soliton}.
\end{lemma}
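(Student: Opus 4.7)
The plan is to reduce uniqueness of positive solutions of \eqref{eq:soliton} to uniqueness of positive, radially decreasing solutions of the associated radial ODE, and then to invoke a classical uniqueness theorem of Kwong--McLeod--Serrin--Tang type.

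First I would show that any positive $H^1$ solution $\phi$ is radially symmetric (up to translation) and radially decreasing. By the remark following the $L^\infty$-bound, such a $\phi$ is $C^2(\R^2)$ and satisfies $0 < \phi(x) < \sqrt{z_\omega} < 1$; standard elliptic bootstrap combined with $\phi \in H^1(\R^2)$ gives $\phi(x) \to 0$ as $|x| \to \infty$, and exponential decay then follows from the same ODE comparison argument as in Proposition~\ref{prop:ex}(4). Since $g$ in \eqref{eq:g} is locally Lipschitz on the bounded range of $\phi$, the Gidas--Ni--Nirenberg moving plane method applies and shows that, after an appropriate translation, $\phi = \phi(r)$ is radially symmetric and strictly decreasing. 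The problem therefore reduces to uniqueness of positive, strictly decreasing, decaying solutions of
\begin{equation*}
\phi''(r) + \tfrac{1}{r}\phi'(r) = 2\omega \phi(r) + 2\lambda \phi(r)^3 \ln \phi(r)^2, \quad \phi'(0) = 0,\ \phi(r) \to 0.
\end{equation*}

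Second, I would appeal to the uniqueness theory for positive radial decaying ground states of $\Delta u + f(u) = 0$ in $\R^n$, with $f(u) = -2\omega u - 2\lambda u^3 \ln u^2$. On $(0, \sqrt{z_\omega})$, this $f$ has exactly the structure required by the classical uniqueness results: $f(0) = 0$, $f'(0) = -2\omega < 0$, a unique positive zero at $\sqrt{z_1}$ (where $z_1 \in (0, 1/e)$ is the smaller root of $z \ln z = -\omega/\lambda$), $f < 0$ on $(0, \sqrt{z_1})$ and $f > 0$ on $(\sqrt{z_1}, \sqrt{z_\omega})$. A Serrin--Tang type theorem then yields uniqueness, provided a monotonicity condition on the logarithmic derivative $K(u) := u f'(u)/f(u)$ can be verified on the appropriate interval. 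A direct computation gives
\[
K(u) = \frac{\omega + (3\ln u^2 + 2)\lambda u^2}{\omega + \lambda u^2 \ln u^2},
\]
so the condition reduces to a sign analysis of $K'$ in the variable $s = u^2$.

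The main obstacle is the verification of this monotonicity. The logarithmic nonlinearity is not one of the standard templates (pure powers, sums of powers, exponentials) for which monotonicity of $K$ holds by inspection, so one needs a careful elementary computation, exploiting the defining identity $\omega + \lambda z_\omega \ln z_\omega = 0$ to control the boundary behaviour on $(0,z_\omega)$. Should this direct check turn out to be delicate on some subinterval, I would fall back on the Coffman--Kwong--McLeod Wronskian/shooting argument: assuming two distinct positive ground states $\phi_{a_1}, \phi_{a_2}$ parametrized by $a_i = \phi_i(0)$, one examines the sign changes of $W(r) = r\bigl(\phi_{a_1}'(r)\phi_{a_2}(r) - \phi_{a_1}(r)\phi_{a_2}'(r)\bigr)$ and uses Sturm oscillation, together with the exponential decay of both solutions at infinity, to force $a_1 = a_2$.
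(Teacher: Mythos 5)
Your overall strategy coincides with the paper's: the paper also reduces to a Serrin--Tang-type uniqueness theorem for the radial ODE (it cites Jang's theorem, whose hypotheses $(f1)$--$(f3)$ are exactly the zero/sign structure of $g$ and $G$ plus the monotonicity of $z g'(z)/g(z)$ that you call $K$). Your preliminary step --- moving planes to reduce general positive solutions to radial decreasing ones --- is a reasonable addition that the paper leaves implicit, and your identification of the sign structure of $f=g$ on $(0,\sqrt{z_\omega})$ and the formula for $K(u)$ are both correct.

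The problem is that you stop exactly at the decisive point. The entire content of this lemma is the verification that $K(u)=uf'(u)/f(u)$ is monotone on the correct interval, and you explicitly defer it (``Should this direct check turn out to be delicate\dots''). This is a genuine gap, not a routine detail: for a logarithmic nonlinearity there is no a priori reason for the monotonicity to hold, and the interval on which it must be checked is not the one you describe. In Jang's framework (and in Serrin--Tang) the monotonicity is required on $[u_1,\bar u)$, where $u_1$ is determined by the antiderivative $G$ (namely $G(u_1)=0$, $G>0$ on $(u_1,\bar u)$), not by the zero of $g$; the paper devotes half its proof to locating $u_1$ as the unique zero of $\tilde g(z)=\tfrac{z^2}{4}-z^2\ln z-\tfrac{\omega}{\lambda}$ on $[0,e^{-1/4})$ and to showing $\alpha<u_1$. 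It then closes the argument with the identity
\begin{equation*}
g(z)^2 s'(z) = 4\lambda z^3\left(2\omega(1+\ln z)-\lambda z^2\right),
\end{equation*}
whose negativity on $(u_1,\sqrt{z_\omega})$ gives the required monotonicity. Without this computation (or a completed Wronskian argument, which you only sketch and which is itself notoriously delicate for non-power nonlinearities), the uniqueness theorem cannot be invoked and the proof is incomplete.
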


\begin{proof}
This result follows from \cite[Theorem 1.1]{JANG2010} provided we can check the condition $(f1)-(f3)$ imposed on $g$. In view of \eqref{eq:g}, we 
see that $g(0)=0$ and continuous on $[0, \infty)$. Recall that its anti-derivative is
\[
G(z) = \lambda z^2\left(\frac{z^2}{4}
   - z^2\ln z - \tfrac{\omega}{\lambda}\right) \equiv \lambda z^2 \tilde g(z).
\]
A straightforward calculation shows that 
$\tilde g$ is strictly increasing on $[0, e^{-1/4})$ and strictly decreasing on $(e^{-1/4}, \infty)$. 
In addition, we know that 
\[
\tilde g(0)=-\tfrac{\omega}{\lambda}<0,\quad  
\tilde g\(e^{-1/4}\)>0, \quad \text{and $\tilde g(z)\to -\infty$, as $z\to +\infty$.}
\] 
Thus, we can choose $u_1$ as the unique zero of $\tilde g$ on the interval $[0, e^{-1/4})$. 
Furthermore we claim that we can choose $\bar u = \sqrt{z_\omega}$. To this end, one first checks that there exists a unique $\alpha \in [0, e^{-1/2})$, 
such that $g(0)=g(\alpha)=g(\sqrt{z_\omega})=0$ and 
\[
g(z)< 0 \ \text{on $[0, \alpha) \cup (\sqrt{z_\omega}, \infty)$, while} \ g(z)>0 \ \text{on $(\alpha , \sqrt{z_\omega})$.}
\]
By the choice of $u_1$, we have that 
\[
G(u_1)=\int_0^{u_1} g(z) \, dz = 0,
\]
and hence $\alpha < u_1$. In particular, since $g(z)>0$ on $(\alpha , \sqrt{z_\omega})$, this implies that $G(z)>0$ on $(u_1, \sqrt{z_\omega})$.

Finally, to satisfy condition $(f3)$, one needs to check if $s(z)= \frac{z g'(z)}{g(z)}$ is decreasing on $[u_1, \sqrt{z_\omega})$. This follows from a 
lengthy calculation which shows that 
\[
g(z)^2s'(z) = 4\lambda z^3\left(2{\omega} (1+\ln z)-\lambda z^2\right)<0, 
\]
on $(u_1 , \sqrt{z_\omega})$. We therefore have all the necessary ingredients to conclude uniqueness of the ground state.
\end{proof}

The proof Theorem \ref{thm:ground} is now complete.
\smallbreak

{\bf Asymptotics for $\omega \to 0$.}
To show that $M(\phi_\omega)\to 0$, as $\omega \to 0$, 
one can follow the ideas in \cite{KOPV17} for the cubic-quintic case
(see also \cite{MorozMuratov2014}). In there, the asymptotic regime 
$\omega\to 0$ is analyzed through the rescaling
\begin{equation*}
  \psi_\omega(x) =\frac{1}{\sqrt\omega}\phi_\omega\(\frac{x}{\sqrt\omega}\),
\end{equation*}
which is $L^2(\R^2)$-unitary. One finds that $\psi_\omega$ solves
\begin{equation*}
  -\Delta \psi_\omega +\omega \lambda \psi_\omega^5 -\lambda \psi_\omega^3+\psi_\omega=0,
\end{equation*}
and thus, the limit $\omega\to 0$ is no longer singular. Moreover, in
the 2D case,
\[
M(\psi_\omega) = M(\phi_\omega) \Tend  \omega 0 M(Q), 
\]
where $Q$ is the cubic ground state solution to
\begin{equation*}
  -\frac{1}{2}\Delta Q -\lambda Q^3 +Q=0,
\end{equation*}
In our case, the logarithm is not
compatible with such a rescaling. 
Instead, we define 
\begin{equation*}
  \psi_\omega(x)
  =\sqrt{\frac{\ln\frac{1}{\omega}}{\omega}}\phi_\omega\(\frac{x}{\sqrt\omega}\), 
\end{equation*}
and a computation shows that $\psi_\omega$ solves
\begin{equation*}
  -\frac{1}{2}\Delta \psi_\omega -\lambda \psi_\omega^3+
  \psi_\omega = \lambda \frac{\ln \ln
    \frac{1}{\omega}}{\ln\frac{1}{\omega}} \psi_\omega^3
  -\frac{\lambda}{\ln\frac{1}{\omega}} \psi_\omega^3\ln \psi_\omega^2 .
\end{equation*}
Recalling that, as $\omega \to 0$
\[
1\ll \ln \ln\frac{1}{\omega}\ll \ln\frac{1}{\omega} ,
\]
and using the analyticity of $\psi_\omega$ in $\omega$, we have $\psi_\omega \Eq \omega 0 Q$, and thus, in terms of $\phi_\omega$, 
\begin{equation*}
    \phi_\omega(x)\Eq \omega 0 \sqrt{\frac{\omega}{\ln
        \frac{1}{\omega}}}Q(x\sqrt\omega). 
  \end{equation*}
In turn, this implies that
\[
M(\phi_\omega) = \frac{1}{\sqrt{\ln
        \frac{1}{\omega}}} M(Q) \Tend  \omega 0 0.
\]
These formal arguments can be made rigorous by following the steps in \cite{KOPV17}, which are based on the use of the linearized operator
\[L: f \mapsto -\frac{1}{2}f-3\lambda Q^2f+f,\]
which is known to be an isomorphism $L: H^1_{\rm rad}\to H^{-1}_{\rm rad}$, cf. \cite{Weinstein85}. The implicit function theorem then allows one to write 
$\psi_\omega$ in terms of $Q$ plus lower order corrections involving
$L^{-1}$. In the present case, the situation is similar, for the
spectral analysis presented in \cite{KOPV17} is readily adapted to the
present case. Details are left to the interested reader.

\begin{remark}\label{rem:smallness}
  This computation also shows that the $L^\infty$-bound derived before is
  far from being sharp for small $\omega$.
  The fact that the $L^2$-norm of $\phi_\omega$
  can be arbitrarily small, is  in sharp contrast with the cubic-quintic
  case. Also, \eqref{eq:phi2} shows that the $H^1$-norm of
  $\phi_\omega$ 
  can be arbitrarily small: smallness in $H^1$ does not guarantee
  scattering. The smallness of the momentum in
  \cite[Theorem~2.1]{NakanishiOzawa} (and thus $\|u_0\|_\Sigma$ sufficiently
  small) must be considered as necessary (since $\phi_\omega$ decays
  exponentially). 
\end{remark}


\section{Orbital Stability}\label{sec:stab}

We start by recalling that for $\rho>0$:
  \begin{equation*}
    \Gamma(\rho) = \left\{ u\in H^1(\R^d),\  M(u)=\rho\right\},
  \end{equation*}
and first prove that the constrained energy is bounded below.
\begin{lemma}[Bound on the energy]\label{lem:min} 
For any $\rho>0$,
\begin{equation*}
 \inf\left\{E(u)\, ;\, u\in \Gamma(\rho)\right\} =-\nu,
\end{equation*}
for some finite $\nu>0$. \end{lemma}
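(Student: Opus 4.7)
My plan is to separate the statement into two independent claims: (i) $E$ is bounded below on $\Gamma(\rho)$, so $\nu<\infty$; and (ii) the infimum is strictly negative, so $\nu>0$. The lower bound in (i) is quantitative and comes from the estimate already used in the Pohozaev identity section; the strict negativity in (ii) is obtained by exhibiting a trial family via a well-chosen scaling.

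For (i), I would pointwise dominate the negative part of the nonlinearity. Splitting the nonlinear integrand according to whether $|u|^2\ge\sqrt e$ or $|u|^2<\sqrt e$, the contribution of the first region is nonnegative and can be dropped from a lower bound. On the second region, the inequality $w\ln(1/w)\le 1/e$ for $w\in(0,1)$, applied with $w=|u|^2/\sqrt e$ exactly as in the proof of \eqref{eq:phi2a}, yields
\begin{equation*}
\int_{|u|^2<\sqrt e} |u|^4\ln\!\left(\frac{\sqrt e}{|u|^2}\right) dx \;\le\; \frac{1}{\sqrt e}\int_{\R^2}|u|^2\,dx \;=\;\frac{\rho}{\sqrt e}.
\end{equation*}
Combined with $\|\nabla u\|_{L^2}^2\ge 0$, this gives $E(u)\ge -\tfrac{\lambda\rho}{2\sqrt e}$, hence $\nu\le\tfrac{\lambda\rho}{2\sqrt e}<\infty$.

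For (ii), I would exhibit an element of $\Gamma(\rho)$ with negative energy using the $L^2$-preserving scaling $u_\beta(x):=\beta u(\beta x)$ for some fixed nontrivial $u\in\Gamma(\rho)$ (for instance smooth and compactly supported). A direct change of variables yields $M(u_\beta)=\rho$, $\|\nabla u_\beta\|_{L^2}^2=\beta^2\|\nabla u\|_{L^2}^2$, and
\begin{equation*}
E(u_\beta)=\beta^2\!\left[\frac{1}{2}\|\nabla u\|_{L^2}^2+\frac{\lambda}{2}\ln(\beta^2)\|u\|_{L^4}^4+\frac{\lambda}{2}\int_{\R^2}|u|^4\ln\!\left(\frac{|u|^2}{\sqrt e}\right)dx\right].
\end{equation*}
Since $\|u\|_{L^4}^4>0$ and $\ln(\beta^2)\to-\infty$ as $\beta\to 0^+$, the bracket is strictly negative for all $\beta$ small enough; the corresponding $u_\beta$ then satisfies $E(u_\beta)<0$, so $\nu>0$.

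The only point that requires a little thought is identifying a useful trial family: the cubic $L^2$-critical scaling is mass-preserving and leaves the exponents of $\|\cdot\|_{L^2}$ and $\|\cdot\|_{L^4}^4$ invariant in the energy, yet it interacts with the logarithm to produce the extra factor $\ln(\beta^2)$. This is precisely the mechanism through which the slightly focusing piece of the intercritical nonlinearity drives the energy below zero; every remaining step is a one-line computation.
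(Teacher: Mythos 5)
Your proposal is correct and follows essentially the same route as the paper: the lower bound comes from splitting the nonlinear term at $|u|^2=\sqrt e$ and controlling the focusing region by the mass (you use the sharper elementary inequality $w\ln(1/w)\le 1/e$ from the Pohozaev section, the paper uses $\ln(1/w)\le 1/w$, but this is cosmetic), and the strict negativity comes from the identical $L^2$-preserving scaling $u_\mu(x)=\mu u(\mu x)$, for which your expansion of $E(u_\mu)$ is the correct one (the paper's displayed formula is missing a factor $\tfrac12$ on the logarithmic term, which does not affect the conclusion). The only substantive omission is that you discard $\tfrac12\|\nabla u\|_{L^2}^2$ in the lower bound, whereas the paper keeps it to record that minimizing sequences are bounded in $H^1$ --- a byproduct not needed for the lemma itself but reused later in the concentration-compactness argument.
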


\begin{proof} We can estimate
\begin{align*}
E(u) \ge & \, \frac{1}{2}\|\nabla u\|_{L^2}^2 -\frac{\lambda}{2}
           \int_{|u|^2<\sqrt e}|u|^4\ln\(\frac{\sqrt e}{|u|^2}\) dx\\
         \ge & \, \frac{1}{2}\|\nabla u\|_{L^2}^2 -\frac{\lambda \sqrt{e}}{2}
           \int_{\R^2} {|u|^2}\, dx  \ge  \, \frac{1}{2}\| u\|_{H^1}^2-K,
\end{align*}
where $K= \tfrac{\rho}{2}(1+ {\lambda \sqrt{e}})>0$. Thus, all (constrained) energy-minimizing sequences are bounded in $H^1(\R^2)$ 
and $-\nu \ge -K>-\infty$. Moreover, for $\mu>0$, let 
\[
u_\mu (x):= \mu u(\mu x) \quad \text{such that $\| u_\mu\|_{L^2(\R^2)}= \| u\|_{L^2(\R^2)}$.}
\]
Then, one finds that
\[
E(u_\mu) = \mu^2 E(u) - \mu^2 \lambda \ln \( \frac{1}{\mu^2}\) \int_{\R^2} |u|^4 \, dx.
\]
Hence, $E(u_\mu)<0$ for $\mu>0$ sufficiently small, and thus $\nu >0$.
\end{proof}

We shall now show that energy minimizers indeed exist, and that they are orbitally stable (as a set), 
by invoking the concentration-compactness method of 
\cite{PL284a} (see also \cite[Proposition~1.7.6]{CazCourant}.

\begin{proof}[Proof of Theorem \ref{thm:stab}]
We proceed in several steps:
\smallbreak

\noindent {\bf Step 1.} Let $(u_n)_{n\ge 0}\subset H^1(\R^2)$ be a minimizing sequence to \eqref{eq:8.3.5}. In view of \cite{PL284a}, we have the standard
trichotomy of concentration compactness. To rule out vanishing of the sequence, we first note that for
$n$ sufficiently large, Lemma~\ref{lem:min} implies that $E(u_n)\le
-\tfrac{\nu}{2}$, and hence, from the proof of Lemma~\ref{lem:min},
\begin{equation*}
           \int_{|u_n|^2<\sqrt e}|u_n|^4\ln\(\frac{\sqrt e}{|u_n|^2}\) dx\ge   \frac{\nu}{\lambda}>0.
\end{equation*}
In addition,
\[
 \int_{|u_n|^2<\sqrt e}|u_n|^3\, dx \gtrsim \int_{|u_n|^2<\sqrt e}|u_n|^4\ln\(\frac{\sqrt e}{|u_n|^2}\) dx,
\]
and, thus, any minimizing sequence is bounded away
from zero in $L^3(\R^2)$. 
\smallbreak

\noindent {\bf Step 2.} Next, we need to rule out dichotomy, in order to conclude 
compactness. Arguing by contradiction, suppose that, after the extraction of some suitable subsequences,
there exist $(v_k)_{k\ge 0}$, $(w_k)_{k\ge 0}$ in $H^1(\R^2)$, such
that
\begin{equation*}
  \operatorname{supp} v_k\cap \operatorname{supp} w_k=\emptyset,
\end{equation*} 
as well as the following properties:
\begin{align*} 
  \|v_k\|_{L^2}^2\Tend k \infty \theta\rho,\quad
    \|w_k\|_{L^2}^2\Tend k \infty (1-\theta)\rho,\quad\text{for some
    }\theta\in (0,1),
  \end{align*}
\begin{equation}\label{eq:kinetic}
\liminf_{k\to \infty} \(\int |\nabla u_{n_k}|^2 - \int|\nabla
  v_k|^2 -\int |\nabla  w_k|^2 \)\ge 0,
\end{equation}
and the remainder $r_k:= u_{n_k} -v_k -w_k$ satisfies
\[
 \| r_k\|_{L^p}\Tend k \infty 0,
\]
for all $2\le p<\infty$. Note that this also implies
\[
 \left|\int |u_{n_k}|^p -\int|v_k|^p- \int|w_k|^p\right|\Tend k \infty0,
\]
since $v_k$ and $w_k$ have disjoint support.

Denote $h(y)= y^4 \ln y$ for $y>0$. A Taylor expansion on $h(y+z)-h(y)-h(z)$, combined with an induction step shows that 
for $\eps>0$ and $N\ge 1$, that there exists a $C_{\eps, N}>0$, such that
\[
\left| h\( \sum^N_{n =1} y_n \) - \sum^N_{n =1}  h(y_n)\right| \le C_{\eps, N} \sum_{\ell \not = k}^N |y_\ell| \( |y_k|^{3-\eps} + |y_k|^{3+\eps}\).
\]
Applying this with $\eps =1$ and $N=3$ to $v_k, w_k, r_k$, and integrating over $\R^2$, yields
\begin{align*}
& \left | \int h(u_{n_k}) - \int h(v_k) -\int h(w_k) \right| \lesssim \int h(r_k)  \, +\\
& +  \int |r_k| \( |v_k|^{2} + |v_k|^{4} +|w_k|^{2} + |w_k|^{4}\) 
+ \int (|v_k| + |w_k| )\( |r_k|^{2} + |r_k|^{4}\),
\end{align*}
where in the second line we have used the fact that $v_k$ and $w_k$ have disjoint supports. Applying H\"older's inequality 
and recalling that $\|r_k\|_{L^p}\to 0$, as $k \to \infty$, shows that all the integrals on the right hand side tend to zero in the limit $k\to \infty$, hence
\[
 \left | \int h(u_{n_k}) - \int h(v_k) -\int h(w_k) \right| \Tend k
 \infty 0.
\]
Recalling that
\[
\int |u_{n_k}|^2 -\int|v_k|^2- \int|w_k|^2\Tend k \infty 0,
\]
we obtain
\begin{equation*}
  \int |u_{n_k}|^4\ln\(\frac{|u_{n_k}|^2}{\sqrt e}\)
-\int|v_k|^4\ln\(\frac{|v_{k}|^2}{\sqrt e}\) - 
\int|w_k|^4\ln\(\frac{|w_{k}|^2}{\sqrt e}\)\Tend k \infty 0. 
\end{equation*}
We consequently infer from \eqref{eq:kinetic} that
\begin{equation*}
  \liminf_{k\to \infty}\(E\(u_{n_k}\)-E(v_k)-E(w_k)\)\ge 0,
\end{equation*}
and thus
\begin{equation}\label{eq:lower}
  \limsup_{k\to \infty} \(E(v_k)+E(w_k)\)\le -\nu.
\end{equation}
Following an idea from \cite{CoJeSq10}, we now use a scaling argument and set
\begin{align*}
  \tilde v_k(x) & = v_k\(\sigma_k^{-1/2}x\),\quad  \sigma_k =
                  \frac{\rho}{\|v_k\|_{L^2}^2} \\
  \tilde w_k(x)& = w_k\(\mu_k^{-1/2}x\),\quad \mu_k =
                  \frac{\rho}{\|w_k\|_{L^2}^2} .
\end{align*}
We have $M( \tilde v_k)=M(\tilde w_k)=\rho $, and hence
$E(\tilde v_k), E(\tilde w_k)\ge -\nu.$ We also find that
\begin{equation*}
  E(\tilde v_k) = \sigma_k\(\frac{1}{2\sigma_k}\int |\nabla v_k|^2
  -\frac{\lambda}{2}\int |v_k|^4 \ln \( \frac{|v_k|^2}{\sqrt{e}}\)\),
\end{equation*}
and so
\begin{equation*}
  E(v_k) = \frac{1}{\sigma_k}E(\tilde v_k)
  +\frac{1-\sigma_k^{-1}}{2}\int |\nabla v_k|^2\ge
  \frac{-\nu}{\sigma_k}+\frac{1-\sigma_k^{-1}}{2}\int |\nabla
  v_k|^2. 
\end{equation*}
Doing the same for $E(w_k)$, yields
\begin{align*}
  E(v_k) + E(w_k) &\ge -\nu\( \frac{1}{\sigma_k}+\frac{1}{\mu_k}\)
  +\frac{1-\sigma_k^{-1}}{2}\int |\nabla  v_k|^2
                    +\frac{1-\mu_k^{-1}}{2}\int |\nabla  w_k|^2\\
  &\ge -\nu\( \frac{1}{\sigma_k}+\frac{1}{\mu_k}\)
  +\frac{1-\sigma_k^{-1}}{2\|v_k\|_{L^2}^2}\| v_k\|_{L^4}^4
                    +\frac{1-\mu_k^{-1}}{2\|w_k\|_{L^2}^2}\| w_k\|_{L^4}^4,
\end{align*}
where in the second step, we have used the Gagliardo-Nirenberg inequality. Passing to the
limit, we infer
\begin{align*}
  \lim\inf_{k\to \infty} \( E(v_k) + E(w_k) \)\ge -\nu
  +\frac{1}{2}\min\(\frac{1-\theta}{\theta\rho},
  \frac{\theta}{(1-\theta)\rho}\) \liminf_{k\to \infty}  \|u_{n_k}\|_{L^4}^4, 
\end{align*}
for any $\theta\in (0,1)$. By H\"older's inequality $\| u \|_{L^3}^3 \le \|u\|_{L^4}^2 \| u \|_{L^2}$ and thus, 
in view of Step 1 and the fact that $\|u_{n_k}\|_{L^2}^2=\rho>0$, we infer $$\liminf_{k\to \infty}\|u_{n_k}\|_{L^4}^4>0.$$ 
This is in contradiction to \eqref{eq:lower} and consequently rules out dichotomy.
\smallbreak

\noindent{\bf Step 3.} We can now invoke  \cite[Proposition 1.7.6(i)]{CazCourant} to deduce that for $u\in H^1(\R^2)$ and $(y_k)\subset \R^2$: 
$u_{n_k}(\cdot - y_k)\to u$ in $L^p(\R^2)$ for all $2\le p<\infty$. Together with the weak lower semicontinuity of the $H^1$ norm 
and the usual bound on the nonlinear potential energy, this implies 
\[
E(u) \le \lim_{k\to \infty}E(u_{n_k})=-\nu,
\]
and thus, the existence of a constraint energy minimizer.
\smallbreak

\noindent{\bf Step 4.} The orbital stability now follows by invoking classical arguments of \cite{CaLi82} (see also \cite{CazCourant}): 
Assume, by contradiction, that
there exist a sequence of initial data $(u_{0,n})_{n\in \N}\subset H^1(\R^2)$, such that
\begin{equation}\label{eq:8.3.17}
  \|u_{0,n}-\phi\|_{H^1}\Tend n \infty 0,
\end{equation}
and a sequence $(t_n)_{n\in \N}\subset \R$, such that the sequence of solutions $u_n$ to \eqref{eq:nls} associated to the initial data $u_{0,n}$ satisfies
\begin{equation}\label{eq:8.3.18}
  \inf_{\varphi\in \mathcal E(\rho)}\left\|u_n(t_n, \cdot) -
    \varphi\right\|_{H^1}>\eps,
\end{equation}
for some $\eps>0$.
Denoting $v_n=u_n(t_n, \cdot)$, the above inequality reads
\begin{equation*}
  \inf_{\varphi\in \mathcal E(\rho)}\|v_n-\varphi\|_{H^1}>\eps.
\end{equation*}
In view of \eqref{eq:8.3.17}, we find that, one the one hand:
\begin{equation*}
  \int_{\R^2}|u_{0,n}|^2 \Tend n \infty \int_{\R^2}|\phi|^2,\quad
  E\(u_{0,n}\)\Tend n \infty E(\phi)=\inf_{v\in \Gamma(\rho)}E(v).
  \end{equation*}
One the other hand, the conservation laws for mass and energy imply
\begin{equation*}
 M(v_n) \Tend n \infty M(\phi),\quad
  E\(v_{n}\)\Tend n \infty E(\phi),
\end{equation*}
and thus, $(v_n)_n$ is a minimizing sequence for the problem
\eqref{eq:8.3.5}. From the previous steps,  there exist a
subsequence, still denoted by $(u_n)_{n\in \N}$, and a sequence of points
    $(y_n)_{n\in \N}\subset \R^2$, such that $v_n(\cdot -y_n)$ has a strong limit $u$
    in $H^1(\R^2)$. In particular, $u$ satisfies \eqref{eq:8.3.5},
    hence a contradiction. 
\end{proof}

\appendix

\section{On the 1D case}
\label{sec:appendix}

Since the $L^2$-critical case in 1D requires a quintic nonlinearity, the
formal analogue of \eqref{eq:nls} reads
\begin{equation}
  \label{eq:1D}
  \left\{
\begin{aligned}
 & i\d_t u +\frac{1}{2}\d_x^2 u =\lambda u|u|^4\ln |u|^2,\quad x\in
 \R,\ \lambda>0,\\
  & u(0,x) = u_0\in H^1(\R).
\end{aligned}  
\right.
\end{equation}
Even though, to our knowledge, this model is not motivated by physics,
it is mathematically similar and gives a hint of what could be
expected for \eqref{eq:nls}. 
Global well-posedness follows from the same
arguments as in Theorem~\ref{thm:gwp}. The analogue of
Theorem~\ref{thm:ground} is also 
straightforward, and yields the condition $0<\omega<\tfrac{\lambda}{6e^{1/3}}$, since, in view of \cite[Theorem~5]{BL83a}, we compute
\begin{equation*}
  G(z) = -\omega z^2 -\frac{\lambda}{3}z^6 \ln\frac{z^2}{e^{1/3}}.
\end{equation*}
We then have a stronger notion of orbital stability than in the case of
Theorem~\ref{thm:stab}:
\begin{theorem}\label{theo:stab1D}
  Let $0<\omega<\tfrac{\lambda}{6e^{1/3}}$, and $\phi_\omega$ be the
  unique even and positive solution to
  \begin{equation*}
    -\frac{1}{2} \phi_\omega^{\prime\prime} +\lambda \phi_\omega
    |\phi_\omega|^4\ln|\phi_\omega|^2 +\omega\phi_\omega=0,\quad x\in \R.
  \end{equation*}
  Then, for all $\eps>0$, there exists $\delta>0$ such that if $u_0\in
  H^1(\R)$ satisfies
  \[\|u_0-\phi\|_{H^1(\R)}\le \delta,\]
  the
  solution to \eqref{eq:1D} satisfies
  \begin{equation*}
    \sup_{t\in \R}\inf_{{\theta\in \R}\atop{y\in
      \R}}\left\|u(t, \cdot)-e^{i\theta}\phi_\omega(\cdot
      -y)\right\|_{H^1(\R)}\le \eps.
  \end{equation*}
\end{theorem}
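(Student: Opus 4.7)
The plan is to strengthen the set-stability argument of Theorem~\ref{thm:stab}---whose proof carries over to 1D upon replacing the 2D Gagliardo--Nirenberg inequality by its 1D counterpart, the exponents $3\pm\eps$ by $5\pm\eps$, and the $L^3$ lower bound used to rule out vanishing by an analogous $L^5$ bound---by showing that for the specific mass $\rho=M(\phi_\omega)$, the set $\mathcal E(\rho)$ of constrained energy minimizers reduces to the single orbit
\[
\mathcal O(\phi_\omega) := \{e^{i\theta}\phi_\omega(\cdot - y) : \theta\in\R,\ y\in\R\}.
\]
Once this identification is achieved, the 1D analog of Theorem~\ref{thm:stab} yields the orbit-stability claim of Theorem~\ref{theo:stab1D} at once, since the $H^1$-distance to the set $\mathcal E(\rho)$ then coincides with the infimum over $(\theta,y)$ of $\|u(t,\cdot)-e^{i\theta}\phi_\omega(\cdot-y)\|_{H^1}$.

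For the identification I would first invoke the adapted concentration--compactness argument to produce a minimizer in $\mathcal E(\rho)$; then, by classical rearrangement inequalities (cf.~\cite{LiLo}), any such minimizer can be assumed even, positive and decreasing after a phase shift and a translation, and it satisfies the ground state equation with some Lagrange multiplier $\omega_\ast>0$. The 1D uniqueness statement (analog of Theorem~\ref{thm:ground}) then forces the minimizer to lie in $\mathcal O(\phi_{\omega_\ast})$, and the problem is thus reduced to showing $\omega_\ast=\omega$.

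This last identification amounts to strict monotonicity of $\omega\mapsto M(\phi_\omega)$ on $(0,\tfrac{\lambda}{6e^{1/3}})$, and is the main obstacle. In 1D, the profile equation is autonomous, so multiplication by $\phi_\omega'$ yields the first integral
\[
\tfrac12(\phi_\omega')^2 = \omega \phi_\omega^2 + \tfrac{\lambda}{3}\phi_\omega^6 \ln\frac{\phi_\omega^2}{e^{1/3}},
\]
which determines $\phi_\omega$ implicitly through its amplitude $a_\omega=\phi_\omega(0)$, itself strictly monotone in $\omega$ by the maximum-principle argument of Section~\ref{sec:prop}. Changing variables $y=\phi_\omega(x)$ in the mass integral produces an explicit formula
\[
M(\phi_\omega) = 2\int_0^{a_\omega} \frac{y^2\,dy}{\sqrt{2\omega y^2 + \tfrac{2\lambda}{3}y^6 \ln(y^2/e^{1/3})}},
\]
from which a direct differentiation, or alternatively an implicit function theorem argument exploiting the fact that the linearization about $\phi_\omega$ has a one-dimensional kernel spanned by the translation mode, should deliver $dM/d\omega\neq 0$. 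Once this monotonicity is secured, $\omega_\ast=\omega$, hence $\mathcal E(\rho)=\mathcal O(\phi_\omega)$, and the orbit-stability conclusion follows along the lines of~\cite{BGR15}.
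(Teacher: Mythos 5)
Your architecture is genuinely different from the paper's: you propose to upgrade the Cazenave--Lions set-stability of Theorem~\ref{thm:stab} by identifying $\mathcal E(\rho)$ with the single orbit of $\phi_\omega$, whereas the paper invokes the Grillakis--Shatah--Strauss/Weinstein criterion and reduces everything to the strict convexity of $d(\omega)=S(\phi_\omega)$. Both routes, however, funnel into the \emph{same} key fact --- strict monotonicity (at least injectivity) of $\omega\mapsto M(\phi_\omega)$, since $d'(\omega)=M(\phi_\omega)$ --- and that is exactly the point your proposal does not prove. Writing the mass as
\begin{equation*}
M(\phi_\omega)=2\int_0^{a_\omega}\frac{y^2\,dy}{\sqrt{2\omega y^2+\tfrac{2\lambda}{3}y^6\ln(y^2/e^{1/3})}}
\end{equation*}
is correct, but asserting that ``a direct differentiation should deliver $dM/d\omega\neq 0$'' is where the whole difficulty lives: increasing $\omega$ increases the denominator (pushing the integrand down) while simultaneously moving the endpoint $a_\omega$, the integrand has a non-integrable-looking $(a_\omega-y)^{-1/2}$ endpoint singularity that makes differentiation under the integral sign delicate, and the two effects compete, so no sign is evident. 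The implicit-function-theorem alternative you mention gives differentiability of $\omega\mapsto\phi_\omega$ but says nothing about the sign of $dM/d\omega$ either. The paper resolves precisely this by quoting the Iliev--Kirchev formula \cite[Lemma~6]{IlievKirchev93} for $d''(\omega)$ and then performing the algebraic simplification
\begin{equation*}
3+\frac{as\bigl(f(a)-f(s)\bigr)}{ag(s)-sg(a)}=\frac{1}{3}\,\frac{a^2-s^2}{\tfrac{g(s)}{s}-\tfrac{g(a)}{a}}=\frac{1}{3}\,\frac{(a^2-s^2)s}{W(s)},
\end{equation*}
which makes the integrand manifestly nonnegative and, together with $W'(a)<0$, yields $d''(\omega)>0$. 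Without an argument of this kind your proof has a gap at its core.

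A secondary remark: even granting injectivity of $M$, your route still needs the full chain (rearrangement, Euler--Lagrange, positivity, the Pohozaev bound placing the multiplier $\omega_\ast$ in $(0,\tfrac{\lambda}{6e^{1/3}})$, and 1D uniqueness) to conclude $\mathcal E(\rho)=\{e^{i\theta}\phi_\omega(\cdot-y)\}$; that chain is standard and plausible, and once completed your approach would buy something the paper's does not state explicitly, namely that $\phi_\omega$ is a constrained energy minimizer at its own mass. But as written, the monotonicity of $M(\phi_\omega)$ --- the one genuinely hard step, and the entire content of the paper's proof --- is assumed rather than established.
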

\begin{proof}
  The proof relies on the Grillakis-Shatah-Strauss theory \cite{GSS87},
  following the breakthrough of M.~Weinstein \cite{Weinstein86CPAM} (see also
  \cite{BGR15}), which implies that result is proven if we know that 
  $d(\omega):=S(\phi_\omega)$ is strictly convex, or, equivalently, that
  $M(\phi_\omega)$ is strictly increasing. Taking advantage of the
  one-dimensional setting, Iliev and Kirchev \cite[Lemma~6]{IlievKirchev93}
  have shown that
  \begin{equation*}
    d^{\prime\prime}(\omega)= -\frac{1}{2W'(a)}\int_0^a\(3 +
    \frac{as(f(a)-f(s))}{ag(s)-sg(a)} \)
      \(\frac{s}{W(s)}\)^{1/2} ds,
    \end{equation*}
    where, in the present case,
    \begin{align*}
  f(s) = \lambda s^2\ln s,\quad g(s) = \int_0^s f(\si)d\si
     =\frac{\lambda}{3}s^3 \ln\(\frac{s}{e^{1/3}}\),\quad  W(s) = \omega s + g(s),
    \end{align*}
    and $a$ is such that $W(a)=0$, $W'(a)<0$, and $W(s)>0$ for
    $0<s<a$.
    The existence of such an $a$ follows from direct computations. Note that
    \begin{equation*}
    3 +
    \frac{as(f(a)-f(s))}{ag(s)-sg(a)}  = \frac{3\frac{g(s)}{s}-
    3\frac{g(a)}{a} +f(a)-f(s)}{\frac{g(s)}{s}-\frac{g(a)}{a}}= \frac{1}{3}\frac{a^2-s^2}{\frac{g(s)}{s}-\frac{g(a)}{a}}.
\end{equation*}
By definition, $g(a)=-a\omega$, so
$\tfrac{g(s)}{s}-\tfrac{g(a)}{a}= \tfrac{W(s)}{s}$, and
\begin{equation*}
    d^{\prime\prime}(\omega)= -\frac{1}{2W'(a)}\int_0^a
      \frac{a^2-s^2}{W(s)}s\(\frac{s}{W(s)}\)^{1/2} ds,
    \end{equation*}
  Now the integrand is clearly nonnegative, and since $W'(a)<0$,
  $d(\omega)$ is strictly convex.
\end{proof}

\bibliographystyle{siam}

\bibliography{biblio}


\end{document}